\Crefname{figure}{Figure}{Figures}
\Crefname{assumption}{Assumption}{Assumptions}
\pgfplotsset{compat=newest}
\newtheorem{assumption}{Assumption}{\bf}{\rm}
{\it}{\rm}
\newtheorem{theorem}{Theorem}{\bf}{\rm}
{\bf}{\rm}
\newtheorem{remark}{Remark}{\rm}{\rm}
\crefname{assumption}{Assumption}{Assumptions}
\Crefname{assumption}{Assumption}{Assumptions}
\def\ifEmpty#1{\def\@temp{#1}\ifx\@temp\@empty} 
\newcommand{\R}{\mathbb{R}}
\newcommand{\C}{\mathbb{C}}
\newcommand{\cC}{\mathcal{C}}                                  
\newcommand{\cD}{\mathcal{D}}                                  
\newcommand{\cI}{\mathcal{I}}                                  
\newcommand{\cL}{\mathcal{L}}                                  
\newcommand{\cO}{\mathcal{O}}                                  
\newcommand{\tcC}{\tilde{\cC}}
\newcommand{\tcD}{\tilde{\cD}}
\DeclareMathOperator{\dif}{d \!}
\newcommand{\dr}{\dif{r}}
\newcommand{\ds}{\dif{s}}
\newcommand{\dt}{\dif{t}}
\newcommand{\ddt}{\frac{\dif{}}{\dif{t}}}
\renewcommand{\exp}[1]{\mathrm{e}^{#1}}
\newcommand{\eA}[1]{\exp{#1 A}}
\newcommand{\set}[2]{\{ #1 \;;\; #2 \}}
\newcommand{\domain}[1]{\mathcal{D}(#1)}
\newcommand{\rank}[1]{\operatorname{rank}\!\left(#1 \right)}
\newcommand{\nrm}[2][]{\left\lVert #2 \right\rVert\ifEmpty{#1}\else_{#1}\fi}
\newcommand{\iprod}[2]{\left( #1, #2 \right)}
\newcommand{\LHH}{\cL(H)}
\begin{document}

\title[Singular value decay of operator-valued equations]{Singular value decay of operator-valued differential Lyapunov and Riccati equations}

\author{Tony Stillfjord}
\address{Max Planck Institute for Dynamics of Complex Technical Systems, Sandtorstr.\ 1, DE-39106 Magdeburg, Germany}
\curraddr{}
\email{stillfjord@mpi-magdeburg.mpg.de}
\thanks{}

\subjclass[2010]{Primary 47A62; Secondary 47A11, 49N10}




\keywords{Differential {R}iccati equations, differential {L}yapunov equations, operator-valued, infinite-dimensional, singular value decay, low rank}

\date{Received: date / Accepted: date}

\begin{abstract}
We consider operator-valued differential Lyapunov and Riccati equations, where the operators $B$ and $C$ may be relatively unbounded with respect to $A$ (in the standard notation). In this setting, we prove that the singular values of the solutions decay fast under certain conditions. In fact, the decay is exponential in the negative square root if $A$ generates an analytic semigroup and the range of $C$ has finite dimension. This extends previous similar results for algebraic equations to the differential case. When the initial condition is zero, we also show that the singular values converge to zero as time goes to zero, with a certain rate that depends on the degree of unboundedness of $C$.
A fast decay of the singular values corresponds to a low numerical rank, which is a critical feature in large-scale applications. The results reported here provide a theoretical foundation for the observation that, in practice, a low-rank factorization usually exists.
\end{abstract}

\maketitle

\section{Introduction} \label{sec:introduction}
We consider differential Lyapunov equations (DLEs) and differential Riccati equations (DREs) of the forms
\begin{equation}
  \label{eq:DLE}
  \dot{P} = A^* P + PA + C^*C, \quad P(0) = G^*G,
\end{equation}
and
\begin{equation}
  \label{eq:DRE}
  \dot{P} = A^* P + PA + C^*C - P BB^* P, \quad P(0) = G^*G,
\end{equation}
respectively. Such equations arise in many different areas, e.g.\ in optimal/robust control, optimal filtering, spectral factorizations, $\mathbf{H}_\infty$-control, differential games, etc.~\cite{AboFIJ03,BasB95,IchK99,PetUS00}.

A typical application for DREs is a linear quadratic regulator (LQR) problem, where one seeks to control the output $y = Cx$ given the state equation $\dot{x} = Ax + Bu$ by varying the input $u$. In the case of a finite time cost function, 
\begin{equation*}
  J(u) = \int_{0}^{T}{ \nrm{y(t)}^2 + \nrm{u(t)}^2 \dt} + \nrm{Gx(T)}^2, 
\end{equation*}
it is well known that the optimal input function $u^{\text{opt}}$ is given in state feedback form. In particular, $u^{\text{opt}}(t) = -B^*P(T-t)x(t)$, where $P$ is the solution to the DRE \cref{eq:DRE}~\cite{BenLP08,LasT00}.

The solution to the DLE, on the other hand, yields the (time-limited) observability Gramian of the corresponding LQR system. It is used in applications such as model order reduction~\cite{morBauBF14,morGawJ90} for determining which states $x$ have negligible effect on the input-output relation $u \mapsto y$, and which can therefore safely be discarded from the system~\cite{morKue17,morBenKS16}.

In the continuous case, the equations \cref{eq:DLE}, \cref{eq:DRE} are operator-valued. After a spatial discretization they become matrix-valued. Approximating their solutions by numerical computations is thus, if done naively, much more expensive than simply approximating, e.g., the corresponding vector-valued equation $\dot{x} = Ax$. A standard way to decrease the computational complexity is to utilize structural properties of the solutions. A commonly used such property is that of low numerical rank~\cite{LiW02,LanMS15,Sti15}, i.e.\ a fast (often exponential) decay of the singular values. This allows us to approximate $P(t) \approx L(t) L(t)^*$ where $L(t)$ is of finite rank. In the matrix-valued setting, we would have $P(t) \in \R^{n \times n}$ and $L(t) \in \R^{n \times r}$ with $r \ll n$.

While there exist results on when such low numerical rank is to be expected for \emph{algebraic} Lyapunov and Riccati equations (i.e.\ the stationary counterparts of \cref{eq:DLE,eq:DRE}), see e.g.~\cite{AntSZ02,SorZ02,BakES15,BenB13,Pen00,BenB16,GruK14,Opm15}, the differential case has so far been neglected in the literature.

The aim of this article is to remedy this situation and provide criteria on $A$, $B$ and $C$ that guarantee a certain decay of the singular values $\{\sigma_k\}_{k=1}^{\infty}$ of the solutions to \cref{eq:DLE,eq:DRE}. 
We consider the operator-valued case, with the standard assumption that $A$ generates an analytic semigroup. In the LQR setting, this corresponds to the control of abstract parabolic problems (including, for example, heat flows and wave equations with strong damping). We allow relatively unbounded operators $B$ and $C$, which means that we can treat various forms of boundary control and observation. In this setting, we follow the approach suggested in~\cite{Opm15} for algebraic equations. There, a decay of the form $\sigma_k \le M\exp{-\gamma \sqrt{k}}$ was shown, i.e.\ we can not expect exponential decay but only exponential in the square root.
The main results of the present article demonstrates that this extends to the differential case, under similar assumptions. In the case that $G = 0$ (and hence $P(0) = 0$), our bounds additionally show that the singular values converge to $0$ as $t \to 0$ with a rate $t^{1-2\alpha}$, where $\alpha$ is a measure of how unbounded the output operator $C$ is.

An outline of the article is as follows: In \cref{sec:preliminaries} we specify the abstract framework, state the assumptions on the operators and recall some resulting properties of the solutions to \cref{eq:DLE,eq:DRE}. Then in \cref{sec:DLE} we use the concept of sinc quadrature to show that certain finite-rank operators approximate the integral $\int_{0}^{t}{\iprod{C\exp{sA} \cdot}{C\exp{sA}\cdot} \ds}$ well. Since this is in fact the solution to \cref{eq:DLE} when $G = 0$, the main results for DLEs then follow quickly. We generalize these results to DREs in \cref{sec:DRE} by factorizing the system using output and input-output mappings. Finally, in \cref{sec:experiments}, we perform a number of numerical experiments on discretized versions of the equations, which verify the theoretical statements.

\section{Preliminaries} \label{sec:preliminaries}

In the operator-valued case, \cref{eq:DLE,eq:DRE} need to be interpreted in an appropriate sense. Here, we mainly follow~\cite{LasT00} (see also~\cite{BenDDetal07}), and outline the ideas for the DRE \cref{eq:DRE} since all the results carry over to the DLE \cref{eq:DLE} by setting $B=0$. Thus, let $H$, $Y$, $U$ and $Z$ be Hilbert spaces, and let the following operators be given: the (unbounded) state operator $A : \domain{A} \subset H \to H$, the input operator $B : U \to \domain{A^*}'$, the output operator $C : \domain{A} \to Y$ and the final state penalization operator $G : H \to Z$. This corresponds to problems arising from the linear quadratic regulator setting.

By $A^*$ we mean the adjoint of $A$ with respect to the inner product on $H$, and $\domain{A^*}'$ denotes the dual space of $\domain{A^*}$, also with respect to the $H$-topology. With the proper interpretation (see e.g.~\cite{LasT00}), it is a superset of $H$; in fact, the completion of $H$ in the norm $\nrm{A^{-1}\cdot}_H$. Additionally, for general Hilbert spaces $X$ and $Y$ we use the notation $\cL(X, Y)$ to denote the set of linear bounded operators from $X$ to $Y$.
\begin{remark}
  In order that the notation conforms to the usual evolution equation setting, we have changed the direction of time so that $P(0) = G^*G$ is the given condition rather than $P(T) = G^*G$ as in~\cite{LasT00}. The only effect of this is to change the signs of all the terms on the right-hand-side.
\end{remark}
Our main assumption is
\begin{assumption}\label{assumption:A}
  The operator $A: \domain{A} \subset H \to H$ is the generator of a strongly continuous analytic semigroup $\exp{tA}$ on $H$.
\end{assumption}
This means that there exists a $\delta \in (0, \pi/2]$ such that $z \mapsto \exp{zA}$ is analytic on the sector $\Delta_{\delta} = \set{z \in \C}{z \neq 0, |\arg(z)| < \delta}$. Further, there exist constants $\omega \in \R$ and $M \ge 0$ such that the fractional powers $(\omega I - A)^{\gamma}$ are well defined, and we have the inequalities $\nrm{\exp{tA}} \le M\exp{\omega t}$ and $\nrm{(\omega I - A)^{\gamma}\exp{tA}} \le M(1 + t^{-\gamma})\exp{\omega t}$, see e.g.~\cite[Section 3.10]{Sta05}. Here, $\omega <  0$ corresponds to the stable case, but we allow $\omega > 0$ too. We also note that $A^*$ is the generator of $\exp{tA^*} = (\exp{tA})^*$.

Further, we allow both $B$ and $C$ to be unbounded operators, but not \emph{too} unbounded. In particular, 
\begin{assumption}\label{assumption:B}
  The operator $B : U \to \domain{A^*}'$ is relatively bounded in the sense that there is a $\beta \in [0, 1)$ such that $(\omega I -A)^{-\beta}B \in \cL(U, H)$.
\end{assumption}
\begin{assumption}\label{assumption:C}
  The operator $C : \domain{(\omega I -A)^{\alpha}} \to Y$ is relatively bounded in the sense that $C(\omega I - A)^{-\alpha} \in \cL(H, Y)$ for $0 \le \alpha < \min(1-\beta, 1/2)$, with the parameter $\beta$ from \cref{assumption:B}.
\end{assumption}
Finally, $G$ needs to provide sufficient smoothing to compensate for the roughness of $B$:
\begin{assumption}\label{assumption:G}
  The operator $G : H \to Z$ is bounded. If $\beta \ge 1/2$, there should also exist a $\theta \ge \beta - 1/2$ such that $G(\omega I -A)^{\theta} : H \to Z$.
\end{assumption}
\begin{remark}
  In the DLE case, we have $B = 0$. \cref{assumption:B} is thus always satisfied and there is no extra restriction on $\alpha$ in \cref{assumption:C} except $\alpha \in [0, 1/2)$.
\end{remark}

\begin{remark}
  \Cref{assumption:G} is marginally stronger than the assumption that $(\omega I - A^*)^{\theta} G^*G \in \LHH$, $\theta > 2\beta -1$, which is made in~\cite{LasT00}. We use \cref{assumption:G} for compatibility with results from the Salamon/Weiss/Staffans framework~\cite{Sta05}, but it can most likely be weakened to the one in~\cite{LasT00}.
\end{remark}

Under \cref{assumption:A,assumption:B,assumption:C,assumption:G}, the DRE \cref{eq:DRE} possesses a classical solution $t \mapsto P(t) \in \LHH$, see e.g.~\cite[Theorem 1.2.2.1]{LasT00}. This solution additionally solves the following integral equation for all $x, y \in H$, and vice versa:
\begin{equation}
\begin{aligned} \label{eq:DRE_int}
  \iprod{P(t)x}{y} &= \iprod{G\exp{tA}x}{G\exp{tA}y} + \int_{0}^{t}{\iprod{C\exp{sA}x}{C\exp{sA}y} \ds} \\
 &- \int_{0}^{t}{\iprod{B^*P(s)\exp{sA}x}{B^*P(s)\exp{sA}y} \ds} .
\end{aligned}
\end{equation}

Combining \cref{assumption:A} and \cref{assumption:C} shows that $C\exp{sA} \in \cL(H, Y)$ for $s > 0$. This actually holds on every subset of the sector of analyticity $\Delta_{\delta}$, as demonstrated e.g.\ in~\cite{Opm15}.
In particular, for every $a \in [0,1)$ there exist positive constants $M_a$ and $\omega$ such that 
$\nrm{C\exp{zA}}_{\cL(H,Y)} \le M_a(1 + |z|^{-\alpha}) \exp{\omega \Re(z)}$ for all $z \in \Delta_{a\delta}$. The constants $M_a$ go to infinity as $a \to 1$, i.e.\ as we approach the limit of analyticity. However, by simply redefining $\delta$ as, e.g., $\delta/2$ we can always get a uniform estimate. In the following, we will therefore omit the dependence on $a$ and write 
\begin{equation}  \label{eq:Cexp_bound}
  \nrm{C\exp{zA}}_{\cL(H,Y)} \le M(1 + |z|^{-\alpha})\exp{\omega \Re(z)}, \quad z \in \Delta_{\delta},
\end{equation}
for two positive constants $M$ and $\omega$.
Since $\alpha < 1/2$, $|z|^{-2\alpha}$ is integrable at $0$ and the first integral term in \cref{eq:DRE_int} is therefore well-defined. That the second integral term is well-defined under \cref{assumption:A,assumption:B,assumption:C,assumption:G} is less straightforward, due to the presence of $P(s)$ and the fact that $\beta$ is allowed to take values in $[1/2, 1)$. We refer to~\cite[Chapter 1]{LasT00}.

\section{Lyapunov equations} \label{sec:DLE}
Let us first consider the Lyapunov case \cref{eq:DLE}. Restricting \cref{eq:DRE_int} by setting $B = 0$ shows that 
\begin{equation} \label{eq:DLE_int}
\iprod{P(t)x}{y} = \iprod{G\exp{tA}x}{G\exp{tA}y} + \int_{0}^{t}{\iprod{C\exp{sA}x}{C\exp{sA}y} \ds},
\end{equation}
which provides a closed-form expression for the solution $P$.
For $x, y \in \domain{A}$ we denote the integrand by $F$;
\begin{equation}
  \label{eq:F_def}
  F(z) = \iprod{ C \eA{z}x}{ C \eA{z} y},
\end{equation}
and note that in fact $F : \Delta_\delta \to \C$. By \cref{eq:Cexp_bound}, for all $x,y \in \domain{A}$ we have the bound
\begin{equation} \label{eq:F_bound}
|F(z)| \le \frac{M^2}{|z|^{2\alpha}}\exp{2\omega \Re(z)} \nrm{x}\nrm{y}.
\end{equation}

Our aim is now to approximate the integral $\int_{0}^{t}{F(s)\ds}$ by sinc quadrature, which converges exponentially in the number of quadrature nodes. The basic idea is to map the interval $(0,t)$ onto the real line, apply the trapezoidal rule, use decay properties of $F$ at $\pm \infty$ and then transform back. The proof uses complex analysis and thus requires us to consider $(0,t)$ as a subset of a domain in $\C$ rather than a real interval. In our case, the appropriate mapping is $\phi_t \colon \C \to \C$, $\phi_t(z) = \ln \frac{z}{t-z}$, with inverse $\psi_t \colon \C \to \C$, $\psi_t(w) = \frac{t\exp{w}}{\exp{w} + 1}$. The function $\phi_t$ maps the eye-shaped domain
\begin{equation*}
  D_E^d(t) = \set{z \in \C}{|\arg \Big(\frac{z}{t-z}\Big)|  < d },
\end{equation*}
where $0 < d < \pi/2$, onto the infinite strip
\begin{equation*}
  D_S^d(t) = \set{w \in \C}{|\Im w| < d }.
\end{equation*}
Here, of course, $D_E^d(t) \supset [0,t]$. See \cref{fig:transformation} for an illustration of these domains.

\begin{figure}
  \centering
  \begin{tikzpicture}
    [
    declare function={
      m(\x,\t,\y) = \t / (exp(-\x) + exp(\x) + 2*cos(deg(\y)));
      n(\x,\t,\y) = \t / (exp(-2*\x) + 1 + 2*exp(-\x)*cos(deg(\y)));
      re(\x,\t,\y) = m(\x,\t,\y) * cos(deg(\y)) + n(\x,\t,\y);
      im(\x,\t,\y) = m(\x,\t,\y) * sin(deg(\y));
    },
    ]
    \tikzmath{\d = -pi/2*0.6; \sind = sin(deg(\d)); \cosd = cos(deg(\d));}

    \begin{axis}[name=eye,
      xshift=0cm,
      xmin=-1, xmax=3, 
      ymin=-2, ymax=2, 
      axis x line=middle,
      axis y line=middle,
      axis line style={->,},
      xlabel={$z_1$},
      xlabel style = {at = (current axis.right of origin), anchor = north west},
      ylabel={$z_2$},
      ylabel style = {left},
      xtick = {0,2},
      xticklabels = {0, $t$ },
      ytick = {0,1},
      yticklabels = {0,1},
      scale = 0.75,
      , after end axis/.append code={\coordinate (eyea) at (axis cs:2,1.5); \coordinate (eyeb) at (axis cs:2,-1.5);},
      ]

      \addplot [name path=coneup, parametric, blue, domain=0:1.5,  variable=\x]
      ({x*\cosd}, {-x*\sind});

      \addplot [name path=coneup, parametric, blue, domain=0:1.5,  variable=\x]
      ({2 - x*\cosd}, {x*\sind});

      \coordinate (a) at (0,0);
      \coordinate (b) at (1,0);
      \coordinate (c) at (\cosd,-\sind);
      \coordinate (d) at (2,0);
      \coordinate (e) at (2-\cosd,\sind);

      \draw pic[draw, angle radius=1cm,"$d$" shift={(4mm, 5mm)}] {angle=b--a--c};
      \draw pic[draw, angle radius=1cm,"$d$" shift={(-4mm, -5mm)}] {angle=b--d--e};

      \addplot [name path=upperarc, parametric, blue, thick, domain = -10:10, samples=100, variable=\x]
      ({re(x,2,\d)}, {im(x,2,\d)});
      \addplot [name path=lowerarc, parametric, blue, thick, domain = -10:10, samples=100, variable=\x]
      ({re(x,2,\d)}, {-im(x,2,\d)});

      \addplot [name path=eyeinterval,  red, thick, domain = 0:2, variable=\x]
      {0};

      \tikzfillbetween[of=upperarc and lowerarc]{pattern=north east lines}

    \end{axis}   

    \begin{axis}[name=strip,
      xshift=6cm,
      xmin=-10, xmax=10, 
      ymin=-2, ymax=2, 
      axis x line=middle,
      axis y line=middle,
      axis line style={->,},
      xlabel={$w_1$},        
      xlabel style = {at = (current axis.right of origin), anchor = north west},
      ylabel={$w_2$},
      ylabel style = {left},
      xtick = {0},
      xticklabels = {,, },
      ytick = {-0.95, 0, 0.95},
      yticklabels = {\raisebox{-1em}[0pt][0pt]{$-d$}, 0, \raisebox{.5em}[0pt][0pt]{$d$}},
      scale = 0.75,
      , after end axis/.append code={\coordinate (stripa) at (axis cs:-4,1.5); \coordinate (stripb) at (axis cs:-4,-1.5);},
      ]
      \addplot[name path=upperline, blue, thick, domain = -8:8, samples=100]
      {pi/2*0.6};
      \addplot [name path=lowerline, parametric, blue, thick, domain = -8:8, samples=100]
      {-pi/2*0.6};

      \addplot [name path=stripinterval, red, thick, domain = -8:8, samples=100, variable=\x]
      {0};

      \draw[color=black, ->] (8.5,-0.5) to (9.5,-0.5);
      \draw[color=black, ->] (8.5,0.5) to (9.5,0.5);

      \draw[color=black, ->] (-8.5,-0.5) to (-9.5,-0.5);
      \draw[color=black, ->] (-8.5,0.5) to (-9.5,0.5);

      \tikzfillbetween[of=upperline and lowerline]{pattern=north east lines}

    \end{axis}   

    \draw[color=black, ->, thick]
    (eyea) to[bend left] node[midway,above] {$\phi_t(z) = \ln \frac{z}{t-z}$} (stripa);
    \draw[color=black, ->, thick] (stripb) to[bend left] node[midway,below] {$\psi_t(w) = \frac{t\exp{w}}{\exp{w}+1}$} (eyeb);

  \end{tikzpicture}
  \caption{The transformations $\phi_t$, $\psi_t$ and the domains $D_E^d(t)$ (shaded, left), $D_S^d(t)$ (shaded, right).}
  \label{fig:transformation}
\end{figure}
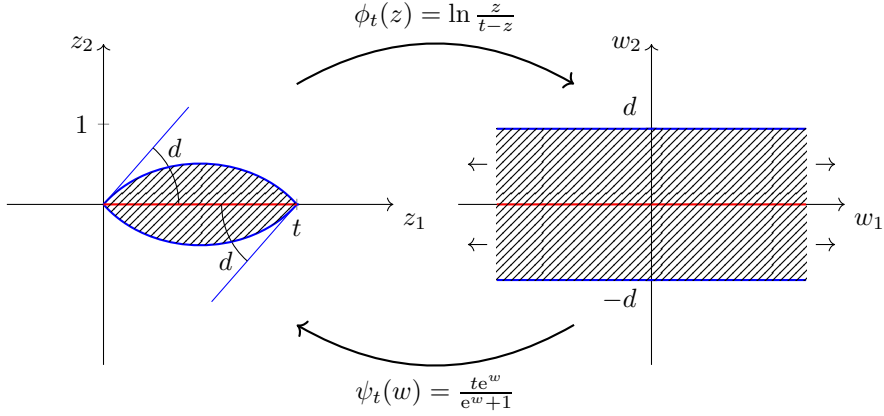

The following result is due to Lund and Bowers~\cite{LunB92}, inspired by~\cite{Ste73}. Here, as well as throughout the rest of the paper, we use the letter $M$ to denote a generic constant that does not depend on $t$. It is not necessarily the same $M$ as in \cref{eq:F_bound,eq:Cexp_bound}.
\begin{theorem}[{\cite[Theorem 3.8]{LunB92}}] \label{thm:LB}
Let $f$ be an analytic function on $D_E^d(t)$ that for some $r \in (0,1)$ satisfies the condition
\begin{equation}
  \label{eq:LBcond1}
  \int_{\psi_t(u + L)} { |f(z)| \dif{z} } = \cO(| u |^r), \quad u \to \pm \infty,
\end{equation}
where $L = \set{iv}{|v| \le d}$. Further assume that 
\begin{equation}
  \label{eq:LBcond2}
  B(f) := \lim_{\gamma \to \partial D_E^d(t)}{ \int_{\gamma}{ |f(z)| \dif{z} } < \infty },
\end{equation}
  where $\gamma$ denotes any closed simple contour in $D_E^d(t)$, and that there are positive constants $M$, $\rho$ and $\mu$ such that
  \begin{equation}
    \label{eq:LBcond3}
    \bigg| \frac{f(z)}{\phi_t'(z)} \bigg| \le M
    \begin{cases}
      \exp{-\rho |\phi_t(z) | } \quad \forall z \in \psi_t\big((-\infty, 0)\big)\\
      \exp{-\mu |\phi_t(z) | } \quad \forall z \in \psi_t\big([0, -\infty)\big)
    \end{cases}.
  \end{equation}
 Choose
  \begin{equation*}
    n = \Big\lceil{\frac{\rho}{\mu} m + 1} \Big\rceil, \quad h = \Bigg( \frac{2 \pi d}{\rho m} \Bigg)^{1/2},
  \end{equation*}
  with $m$ a nonnegative integer large enough that $h \le \frac{2 \pi d}{\ln 2}$, and define the quadrature nodes $z_k$ and weights $w_k$ by
  \begin{equation*}
    z_k = \psi_t(kh) = \frac{t\exp{kh}}{\exp{kh} + 1},
    \quad  w_k = \Big( \phi_t'(z_k) \Big)^{-1} = \frac{t \exp{kh}}{(\exp{kh} + 1)^2}.
  \end{equation*}
  Then it holds that
  \begin{equation*}
    \Bigg| \int_{0}^{t}{f(z) \dif{z}} - h\sum_{k=-m}^{n}{w_k f(z_k)} \Bigg| 
    \le \bigg(\frac{M}{\rho} + \frac{M}{\mu} + 2B(f) \bigg) \exp{-(2 \pi \rho d m)^{1/2}}.
  \end{equation*}
\end{theorem}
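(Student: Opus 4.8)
The plan is to push the whole computation onto the real line through the conformal change of variables $z = \psi_t(w)$, reduce the statement to the standard error estimate for the infinite trapezoidal rule on $\R$, and then control separately the two approximations involved: replacing $\int_\R$ by the bi-infinite trapezoidal sum (the discretization, or aliasing, error) and replacing that sum by the finite one from $-m$ to $n$ (the truncation error). Concretely, substituting $z = \psi_t(w)$ gives $\dif{z} = \big(\phi_t'(\psi_t(w))\big)^{-1}\dif{w}$, so with $g(w) = f(\psi_t(w))/\phi_t'(\psi_t(w))$ one has $\int_0^t f(z)\dif{z} = \int_{-\infty}^\infty g(w)\dif{w}$, while the quadrature sum becomes $h\sum_{k=-m}^n w_k f(z_k) = h\sum_{k=-m}^n g(kh)$ because $z_k = \psi_t(kh)$ and $w_k = (\phi_t'(z_k))^{-1}$. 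Since $\phi_t$ maps $D_E^d(t)$ onto the strip $D_S^d(t)$ and $f$ is analytic on $D_E^d(t)$, the transformed integrand $g$ is analytic on $D_S^d(t)$. A triangle inequality then splits the error as $E_D + E_T$ with $E_D = \big|\int_{-\infty}^\infty g - h\sum_{k=-\infty}^\infty g(kh)\big|$ and $E_T = h\big|\sum_{k<-m}g(kh) + \sum_{k>n}g(kh)\big|$.

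I expect $E_D$ to be the main obstacle, as it is the heart of sinc-quadrature theory. For a function analytic in the strip and lying in the associated Hardy space $\mathbf{H}^1(D_S^d(t))$, the aliasing error admits a contour representation over the two boundary lines $\Im w = \pm d$, obtained by integrating $g$ against the periodic kernel that generates the trapezoidal sum (a residue/Poisson-summation computation), see~\cite{Ste73}. Condition~\cref{eq:LBcond1} is precisely what is needed to place $g$ in this Hardy space, so that the representation is valid and the contributions as $\Re w \to \pm\infty$ vanish. Because $|g(w)\dif{w}| = |f(z)\dif{z}|$ under the substitution, the total boundary $L^1$-norm of $g$ equals the quantity $B(f)$ from~\cref{eq:LBcond2}, and the representation gives $E_D \le B(f)/(\exp{2\pi d/h}-1)$. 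The hypothesis $h \le 2\pi d/\ln 2$ forces $\exp{2\pi d/h} \ge 2$, whence $E_D \le 2B(f)\exp{-2\pi d/h}$.

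The truncation error $E_T$ is then elementary bookkeeping. Under the substitution $|\phi_t(z)| = |w|$, so that \cref{eq:LBcond3} reads $|g(w)| \le M\exp{-\rho|w|}$ on the negative half-line and $|g(w)| \le M\exp{-\mu|w|}$ on the positive one. Summing the resulting geometric tails and using the elementary inequality $1 - \exp{-x} \ge x\exp{-x}$ gives $h\sum_{k>m}\exp{-\rho kh} \le \rho^{-1}\exp{-\rho m h}$ and the analogous bound for the right tail, so that $E_T \le \frac{M}{\rho}\exp{-\rho m h} + \frac{M}{\mu}\exp{-\mu n h}$.

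Finally I would balance the two contributions through the stated parameter choices. With $h = (2\pi d/(\rho m))^{1/2}$ one computes $2\pi d/h = \rho m h = (2\pi\rho d m)^{1/2}$, so the discretization exponent matches the left-truncation exponent. The choice $n = \lceil \frac{\rho}{\mu}m + 1\rceil$ forces $\mu n \ge \rho m$, hence $\mu n h \ge (2\pi\rho d m)^{1/2}$ and the right tail is no larger. Collecting the three terms yields $E_D + E_T \le \big(\frac{M}{\rho} + \frac{M}{\mu} + 2B(f)\big)\exp{-(2\pi\rho d m)^{1/2}}$, which is the claimed bound.
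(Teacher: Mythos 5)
This theorem is not proved in the paper at all: it is imported verbatim, with attribution, as Theorem~3.8 of Lund and Bowers \cite{LunB92}, so there is no ``paper proof'' to compare against --- the relevant comparison is with the classical sinc-quadrature argument, and your reconstruction is that argument, carried out correctly. The transplantation $g(w) = f(\psi_t(w))/\phi_t'(\psi_t(w))$, the split into aliasing and truncation errors, the identification $N(g) = B(f)$ of the boundary norm under \cref{eq:LBcond2}, the bound $E_D \le B(f)/(\mathrm{e}^{2\pi d/h}-1) \le 2B(f)\,\mathrm{e}^{-2\pi d/h}$ using $h \le 2\pi d/\ln 2$, the tail bounds via $1-\mathrm{e}^{-x} \ge x\mathrm{e}^{-x}$, and the exponent balancing $\rho m h = 2\pi d/h = (2\pi\rho d m)^{1/2}$ with $\mu n h \ge \rho m h$ all check out and reproduce exactly the stated constant $\big(\tfrac{M}{\rho}+\tfrac{M}{\mu}+2B(f)\big)$. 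The one step you invoke rather than prove --- the contour representation of the aliasing error for functions satisfying \cref{eq:LBcond1,eq:LBcond2}, cited to \cite{Ste73} --- is indeed the technical core of the Lund--Bowers class $B(D_E^d(t))$ and is the same ingredient their proof rests on, so deferring to it is appropriate here; just be aware that the precise role of the $\cO(|u|^r)$, $r<1$, condition (killing the vertical-segment contributions in the limiting rectangle contours) is buried inside that citation and is not literally a Hardy-space membership statement.
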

Specifying this theorem to the function $F$ given in \cref{eq:F_def} leads to
\begin{theorem}\label{thm:int_approx}
Let \cref{assumption:A,assumption:C}  be satisfied, and let $h$, $n$, $z_k$ and $w_k$ be chosen as in \cref{thm:LB} with $d = \delta$. Then there is a positive constant $M$, independent of $t$, $x$ and $y$, but dependent on $\alpha$, such that
\begin{equation*}
  \Bigg| \int_{0}^{t}{F(z) \dif{z}} - h\sum_{k=-m}^{n}{w_kF(z_k)} \Bigg| 
    \le M t^{1-2\alpha} \exp{-(2 \pi (1-2\alpha) \delta m)^{1/2}} \nrm{x} \nrm{y}.
\end{equation*}
\end{theorem}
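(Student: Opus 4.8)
The plan is to apply \cref{thm:LB} to $f = F$ with $d = \delta$, so the bulk of the work is verifying its three hypotheses for $F$ and then tracking how the resulting constants depend on $t$. Before any of the quadrature estimates, I would establish that $F$ is analytic on the eye-shaped domain. The bound \cref{eq:Cexp_bound} guarantees that $z \mapsto C\eA{z}$, and hence $F$, is analytic on the sector $\Delta_\delta$, so it suffices to check the geometric inclusion $D_E^\delta(t) \subseteq \Delta_\delta$. Writing a boundary point as $z = \psi_t(u + i\delta)$, one has $\arg\bigl(z/(t-z)\bigr) = \delta$ by definition, and a short computation shows $0 < \Re z < t$ on the whole eye; hence $\arg(t - z) \in (-\pi/2, 0)$ and $\arg z = \delta + \arg(t-z) < \delta$. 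Thus $|\arg z| < \delta$ throughout $D_E^\delta(t)$, so $F$ is analytic there and \cref{eq:F_bound} is available on the eye.

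Conditions \cref{eq:LBcond1} and \cref{eq:LBcond2} I would dispatch together, using \cref{eq:F_bound} and the exponential decay of $\psi_t'(w) = t\exp{w}/(\exp{w}+1)^2$ as $\Re w \to \pm\infty$. Along the arc $\psi_t(u + L)$ the factor $|\psi_t'|$ decays like $t\exp{-|u|}$, which dominates the at-worst $|z|^{-2\alpha}$ growth of $|F|$ near $z = 0$ (this is where $\alpha < 1/2$ enters); the arc integral therefore tends to $0$, so \cref{eq:LBcond1} holds with any $r \in (0,1)$. For \cref{eq:LBcond2} I would substitute $z = t\zeta$, turning the boundary integral into $t^{1-2\alpha}$ times an integral over $\partial D_E^\delta(1)$ of $|\zeta|^{-2\alpha}$ against an exponentially decaying Jacobian; integrability at the endpoints $\zeta = 0$ and $\zeta = 1$ again uses $2\alpha < 1$, giving $B(F) \le M t^{1-2\alpha}\nrm{x}\nrm{y}$.

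The heart of the proof is \cref{eq:LBcond3}, where the decay rates $\rho$ and $\mu$ are fixed. On the real axis $\phi_t(z) = u$ and $1/\phi_t'(z) = z(t-z)/t$, so I would estimate $|F(z)/\phi_t'(z)| = |F(z)|\,z(t-z)/t$ via \cref{eq:F_bound}. As $u \to -\infty$ one has $z \approx t\exp{u}$, whence this product behaves like $t^{1-2\alpha}\exp{(1-2\alpha)u} = t^{1-2\alpha}\exp{-(1-2\alpha)|\phi_t(z)|}$; as $u \to +\infty$ one has $t - z \approx t\exp{-u}$ with $F$ bounded, giving decay $\exp{-|u|} \le \exp{-(1-2\alpha)|\phi_t(z)|}$. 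Hence \cref{eq:LBcond3} holds with $\rho = \mu = 1 - 2\alpha$ and a constant of size $M t^{1-2\alpha}\nrm{x}\nrm{y}$, which is exactly the rate producing the exponent $(2\pi(1-2\alpha)\delta m)^{1/2}$ in \cref{thm:LB}.

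Finally I would assemble the error constant $M/\rho + M/\mu + 2B(F)$: with $\rho = \mu = 1-2\alpha$ each piece is bounded by $M t^{1-2\alpha}\nrm{x}\nrm{y}$, the factor $1/(1-2\alpha)$ accounting for the stated $\alpha$-dependence, and the claimed bound follows. I expect the main obstacle to be bookkeeping the $t$-dependence uniformly rather than any single estimate: the scaling $z = t\zeta$ cleanly extracts the $t^{1-2\alpha}$ factor, but one must also control the semigroup factor $\exp{2\omega\Re z}$ appearing in \cref{eq:F_bound}. Since $0 \le \Re z \le t$ on the eye, this factor is $\le 1$ when $\omega \le 0$ and bounded by $\exp{2\omega t}$ otherwise; on any finite time horizon it is uniformly bounded, and I would absorb it into the generic constant $M$, so that the final estimate retains the clean form $M t^{1-2\alpha}\exp{-(2\pi(1-2\alpha)\delta m)^{1/2}}\nrm{x}\nrm{y}$.
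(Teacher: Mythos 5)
Your proposal is correct and follows essentially the same route as the paper's proof: verify the three hypotheses of \cref{thm:LB} for $F$ on $D_E^\delta(t)$ (analyticity via the inclusion in the sector, \cref{eq:LBcond1,eq:LBcond2} via \cref{eq:F_bound} with the factor $t^{1-2\alpha}$ extracted, \cref{eq:LBcond3} with constant $Mt^{1-2\alpha}$), and read off the exponent from $\rho = 1-2\alpha$. The only deviation is that you relax the decay rate on $\psi_t([0,\infty)) = [t/2,t)$ to $\mu = 1-2\alpha$ where the paper uses the sharper $\mu = 1$ (so your rule has $n = m+1$ nodes instead of $\lceil (1-2\alpha)m + 1\rceil$), which merely enlarges the quadrature rule and leaves the final bound unchanged; just note that your asymptotic estimates in \cref{eq:LBcond3} should be stated uniformly on the whole intervals (which is immediate, since $\exp{u}+1 \in [1,2]$ for $u \le 0$).
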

\begin{proof}
  We verify the conditions of \cref{thm:LB}. Since the domain $D_E^\delta(t)$ is a subset of the cone $\set{w \in \C}{|\arg{w}| \le \delta}$ for any $t > 0$, the function $F$ is clearly analytic on $D_E^\delta(t)$. Suppose that $z = \psi_t(u+iv)$ where $|v| \le \delta$. Then
  \begin{equation*}
    \Big|\frac{\dif{z}}{\dif{v}}\Big| = \frac{t\exp{u}}{|\exp{u}\exp{iv} + 1|^2} \le t\min(\exp{u}, \exp{-u}) \le t,
  \end{equation*}
  since $\delta < \pi / 2$ means that $|\exp{u}\exp{iv} + 1| \ge \max(1, \exp{u})$. Hence
  \begin{align*}
    \int_{\psi_t(u + L)} { |F(z)| \dif{z} } &\le \int_{-\delta}^{\delta}{ \bigg|F\bigg(\frac{t\exp{u}\exp{iv}}{\exp{u}\exp{iv} + 1}  \bigg)  \bigg| t \, \dif{v} } \\
                                         &\le M t  \int_{-\delta}^{\delta}{ \bigg|\frac{t\exp{u}\exp{iv}}{\exp{u}\exp{iv} + 1} \bigg|^{-2\alpha} \dif{v} } \\
                                         &\le 2 M \pi t^{1-2\alpha},
  \end{align*}
where we have used \cref{eq:F_bound} as well as the estimate $\exp{2\omega \Re(z)} \le \max(1, \exp{2\omega T}) \le M$ in the second step and the inequality $|\exp{u}\exp{iv} + 1| \le \exp{u}+1 \le 2 \exp{u}$ in the third step. As this bound is independent of $u$ and $1 - 2\alpha > 0$ due to \cref{assumption:C}, the first condition \cref{eq:LBcond1} is satisfied.

To check the second condition, we make a change of variables $w = \eta(z) = \frac{z}{t-z}$. It is easily seen that $\eta$ maps the boundary of $D_E^{\delta}(t)$ onto the rays $\set{r\exp{\pm i\delta}}{r \ge 0}$, that the inverse is given by $z = \eta^{-1}(w) = \frac{tw}{1+w}$ and that the derivative of the inverse is given by $w \mapsto \frac{t}{(1+w)^2}$. Denoting the top and bottom parts of $\partial D_E^{\delta}(t)$ by $\partial D_{+}$ and $\partial D_{-}$, respectively, we thus have $B(F) = \int_{\partial D_{+}}{|F(z)\dif{z}|} + \int_{\partial D_{-}}{|F(z)\dif{z}|}$ where
\begin{align*}
  \int_{\partial D_{\pm}}{|F(z)|\dif{z}} &= \int_{0}^{\infty}{ \bigg|F\bigg(\frac{t r\exp{\pm i\delta}}{1 + r\exp{\pm i\delta}}\bigg)\bigg| t \big|1 + r\exp{\pm i\delta} \big|^{-2} \dr} \\
                                   &\le M\int_{0}^{\infty}{ \bigg|\frac{t r\exp{\pm i\delta}}{1 + r\exp{\pm i\delta}}\bigg|^{-2\alpha} t \big|1 + r\exp{\pm i\delta} \big|^{-2} \dr},
\end{align*}
again using \cref{eq:F_bound} and bounding the exponential term by $\max(1, \exp{2\omega T}) $. As $|1 + r\exp{\pm i\delta} | \ge \max(1, r)$ we get
\begin{equation*}
  \int_{\partial D_{\pm}}{|F(z)|\dif{z}} \le t^{1-2\alpha} \Bigg( \int_{0}^{1}{ r^{-2\alpha} \dr} + \int_{1}^{\infty}{ r^{-2} \dr} \Bigg),
\end{equation*}
so that, in conclusion,
\begin{equation*}
  B(F) \le 2 t^{1-2\alpha} \Big( \frac{1}{1-2\alpha} + 1 \Big).
\end{equation*}

Finally, we check condition \cref{eq:LBcond3}. A simple computation shows that $\phi_t'(z) = \frac{t}{z(t-z)}$. Clearly, $\psi_t\big((-\infty, 0)\big) = (0, t/2) =: \Gamma_1$ and $\psi_t\big([0, \infty)\big) = [t/2, t) =: \Gamma_2$, which means that on these intervals we have
\begin{equation*}
  \exp{-\rho |\phi_t(z)|} = z^{\rho} (t-z)^{-\rho} \quad \text{and} \quad \exp{-\mu |\phi_t(z)|} = z^{-\mu} (t-z)^{\mu}.
\end{equation*}
On $\Gamma_1$, $|t-z| \le t$, so by \cref{eq:F_bound} we get
\begin{align*}
  \bigg| \frac{F(z)}{\phi_t'(z)} \bigg| &\le M |z|^{-2\alpha}\exp{2\omega\Re(z)} |z||t-z| t^{-1} 
                                      \le M |z|^{1-2\alpha} t^{-1} |t-z|^{2\alpha -1} |t-z|^{2-2\alpha} \\
                                      &\le M t^{1-2\alpha} |z|^{1-2\alpha} |t-z|^{2\alpha -1},
\end{align*}
i.e.\ the desired bound holds with $\rho = 1 - 2\alpha$ and constant $M t^{1-2\alpha}$, where $M$ is independent of $t$.
On $\Gamma_2$, $|z| \le t$, and we similarly get
\begin{align*}
  \bigg| \frac{F(z)}{\phi_t'(z)} \bigg| &\le M |z|^{1-2\alpha} |t-z| t^{-1} 
                                       \le M |z|^{-1}|t-z| |z|^{2-2\alpha} t^{-1} \\
                                      &\le M t^{1-2\alpha} |z|^{-1}|t-z|,
\end{align*}
i.e.\ the desired bound holds with $\mu = 1$ and constant $M t^{1-2\alpha}$, where $M$ is again independent of $t$.
\end{proof}

We denote the singular values of $P$ by $\sigma_k(P)$ and order them in decreasing order. Let us first consider the case when $G = 0$.
\begin{theorem} \label{thm:DLE_decay_G0}
 Let \cref{assumption:A,assumption:C}  be satisfied, with the output space $Y$ having finite dimension $\dim Y \ge 1$. Further assume that $G = 0$. Then the singular values of the solution $P$ to the DLE \cref{eq:DLE_int} satisfy
 \begin{equation*}
   \sigma_k(P(t)) \le Mt^{1-2\alpha} \exp{-\eta \sqrt{k - 2\dim{Y}} },
 \end{equation*}
 for $k \ge 4 \dim{Y}$,  where $M$ and $\eta$ are positive constants independent of $t$ but dependent on $\alpha$.
\end{theorem}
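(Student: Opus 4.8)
The plan is to exploit the closed form \cref{eq:DLE_int}, which for $G = 0$ reads $\iprod{P(t)x}{y} = \int_0^t F(s)\,\ds$ with $F$ as in \cref{eq:F_def}, and to show that the sinc quadrature rule of \cref{thm:int_approx} produces a \emph{finite-rank} operator that is exponentially close to $P(t)$ in $\LHH$. First I would introduce the operator
\begin{equation*}
  \tilde{P}(t) = h\sum_{k=-m}^{n} w_k \big(C\eA{z_k}\big)^* C\eA{z_k},
\end{equation*}
which is well defined and bounded because $z_k = \psi_t(kh) \in (0,t) \subset \Delta_\delta$, so that $C\eA{z_k} \in \cL(H,Y)$ by \cref{eq:Cexp_bound}. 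The key structural observation is that each summand factors through $Y$, whence $\rank{\big(C\eA{z_k}\big)^* C\eA{z_k}} \le \dim Y$; consequently $\tilde{P}(t)$ has rank at most $(n+m+1)\dim Y$. Since $\rho/\mu = 1-2\alpha \le 1$ in \cref{thm:int_approx}, the upper index obeys $n \le m+1$, giving the cleaner bound $\rank{\tilde P(t)} \le (2m+2)\dim Y$.

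Next I would convert the pointwise quadrature estimate into an operator-norm estimate. Because $\iprod{(P(t) - \tilde P(t))x}{y}$ is precisely the quadrature error $\int_0^t F(s)\,\ds - h\sum_k w_k F(z_k)$, \cref{thm:int_approx} gives
\begin{equation*}
  |\iprod{(P(t) - \tilde P(t))x}{y}| \le M t^{1-2\alpha}\exp{-(2\pi(1-2\alpha)\delta m)^{1/2}}\nrm{x}\nrm{y}
\end{equation*}
for $x,y \in \domain{A}$, and since both operators lie in $\LHH$ this extends to all $x,y \in H$ by density of $\domain{A}$. Taking the supremum over the unit ball yields $\nrm[\LHH]{P(t) - \tilde P(t)} \le M t^{1-2\alpha}\exp{-(2\pi(1-2\alpha)\delta m)^{1/2}}$.

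The final step uses the characterization of the singular values as approximation numbers: for any bounded operator, $\sigma_{r+1}(P(t)) \le \nrm[\LHH]{P(t) - Q}$ whenever $\rank{Q} \le r$ (since these approximation numbers tend to $0$, $P(t)$ is in fact compact and they coincide with the $\sigma_k$). Taking $Q = \tilde P(t)$ with $r = (2m+2)\dim Y$ and then, for a given index $k \ge 4\dim Y$, choosing the largest admissible $m$ with $(2m+2)\dim Y + 1 \le k$ (so that $\sigma_k(P(t)) \le \sigma_{r+1}(P(t))$), one gets $m \ge (k - 1 - 4\dim Y)/(2\dim Y)$. Substituting this lower bound into the exponent turns $\exp{-(2\pi(1-2\alpha)\delta m)^{1/2}}$ into $\exp{-\eta\sqrt{k - 2\dim Y}}$ for a positive constant $\eta$ depending only on $\alpha$, $\delta$ and $\dim Y$ (essentially $\eta = (\pi(1-2\alpha)\delta/\dim Y)^{1/2}$). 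The threshold $k \ge 4\dim Y$ is exactly what forces $m \ge 0$.

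I expect the main obstacle to be bookkeeping rather than anything conceptually deep: one must (i) justify the rank count, in particular $n \le m+1$ and the attendant factor of two, and (ii) control the off-by-one and floor effects when inverting $m \leftrightarrow k$, absorbing the passage from $\sqrt{k-1-4\dim Y}$ to $\sqrt{k-2\dim Y}$ and the finitely many small $k$ (for which \cref{thm:int_approx} still requires $m \ge m_0$) into the generic constants $M$ and $\eta$. For the latter I would invoke the crude a priori bound $\nrm[\LHH]{P(t)} \le M t^{1-2\alpha}$, obtained by integrating \cref{eq:F_bound}, which already dominates the claimed right-hand side since $\exp{-\eta\sqrt{k-2\dim Y}} \le 1$.
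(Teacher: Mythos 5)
Your proposal is correct and follows essentially the same route as the paper: the same sinc-quadrature finite-rank approximant with rank at most $(2m+2)\dim Y$ (using $n \le m+1$), the same passage from the bilinear-form error bound of \cref{thm:int_approx} to an operator-norm bound via density of $\domain{A}$, and the same approximation-number characterization of singular values followed by the $m \leftrightarrow k$ bookkeeping (which the paper handles in a footnote, and you handle via the a priori bound $\nrm{P(t)} \le Mt^{1-2\alpha}$ for the finitely many small $k$). The only cosmetic difference is that the paper exploits self-adjointness to write the norm as $\sup_{\nrm{z}=1}|\iprod{(P(t)-P_m(t))z}{z}|$, whereas you use the general bilinear-form supremum; both are valid.
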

After our preliminary work, the proof follows almost exactly as in~\cite{Opm15}:
\begin{proof}
  We have 
  \begin{equation*}
    \iprod{P(t)x}{y} = \int_{0}^{t}{F(z) \dif{z}}.
  \end{equation*}
  Now define $n$, $z_k$ and $w_k$ as in \cref{thm:int_approx} and define the approximation $P_m$ by
  \begin{equation*}
    P_m = h\sum_{k=-m}^{n}{w_k \exp{z_kA^*}C^*C \exp{z_kA} }.
   \end{equation*}
Since $P(t)$ and $P_m(t)$ are both self-adjoint operators and $\domain{A}$ is dense in $H$, by \cref{thm:int_approx} we then get
\begin{align*}
  \nrm{P(t) - P_m(t)} &= \sup_{\substack{z \in \domain{A}\\ \nrm{z} = 1}} \big| \iprod{\big(P(t) - P_m(t)\big)z}{z} \big| \\
                      &\le M t^{1-2\alpha}  \exp{-(2 \pi (1-2\alpha) d m)^{1/2}}.
\end{align*}
Now let
\begin{equation*}
  k_m = (2m + 2) \dim{Y}.
\end{equation*}
Since $n \le m + 1$, the rank of $P_m(t)$ is at most $k_m$, and we immediately see that we have the bound $\sigma_{k_m+1}(P(t))  \le M t^{1-2\alpha} \exp{-\eta \sqrt{m}}$ with $\eta = \sqrt{(2 \pi (1-2\alpha) d}$.
As the singular values are decreasing, we may rewrite this\footnote{
Let $k = a + bm$ with $b > 0$. 
For $j = k+1, \ldots, k+1+b$ we have
$\sigma_j \le \sigma_{k+1} \le M\exp{-\eta\sqrt{m}} \le M\exp{-\tilde{\eta}\sqrt{k-a}} \le M\exp{-\tilde{\eta}\sqrt{j-a}} \exp{-\tilde{\eta}\big(\sqrt{k-a} - \sqrt{j-a} \big)}$,
with $\tilde{\eta} = \eta / \sqrt{b}$.
Now, $\sqrt{k-a} - \sqrt{j-a} \ge \sqrt{k-a} - \sqrt{k+1+b-a} = \sqrt{bm} - \sqrt{b(m+1)+1}$. The latter function is decreasing with $m$, so we get $\sigma_j \le M\exp{\eta\big(\sqrt{2 + 1/b} - 1\big)} \exp{-\tilde{\eta}  \sqrt{j-a}}$.
}
 as
\begin{equation*}
  \sigma_{j} \le \tilde{M} t^{1-2\alpha} \exp{- \tilde{\eta}  \sqrt{j  - 2\dim{Y}}},
\end{equation*}
for $j \ge 4\dim{Y}$, with the modified constants $\tilde{M} = M\exp{-\eta \big(\sqrt{2 + 1/(2\dim Y)} - 1\big)}$  and $\tilde{\eta} = \frac{\eta}{\sqrt{2\dim{Y}}}$.
\end{proof}

\begin{remark}\label{remark:shift}
The theorem is stated for $k \ge 4 \dim Y$ since this is the maximal rank of the approximant $P_1(t)$, which provides the first explicit information we have. As the singular values are decreasing, it is of course possible to scale the constant $M$ by $\sigma_1 / \sigma_{(4\dim Y)}$ and show exponential square-root decay for $k \ge 1$. However, the bound is then also that much worse in the given interval.
\end{remark}

\begin{remark}
  In the current approach, the factor $t^{1-2\alpha}$ is desired when $t$ is small, but also means that the bound deteriorates when $t \to \infty$. This holds also in the exponentially stable case, i.e. when $\omega < 0$, because we can not bound $\exp{2\omega \Re(z)}$ \emph{uniformly} on $(0,t/2)$ by $\exp{-Mt}$ for any positive $M$.
However, when $\omega < 0$ the solution to the DLE tends to the solution of the corresponding algebraic Lyapunov equation (ALE) $0 = A^*P + PA + C^*C$ as $t \to \infty$, see e.g.~\cite[Section 2.3]{LasT00} (also for the more general Riccati case). If $\omega < 0$ and $t \in [0, T]$ where $T$ is very large the bound in \cref{thm:DLE_decay_G0} is therefore overly pessimistic, and we might instead start from the ALE decay results and consider the small perturbation arising from the difference between the ALE and DLE solutions. The ALE case was considered in~\cite{Opm15}, which uses the sinc quadrature theory for the infinite interval $(0, \infty)$~\cite[Theorem 3.9]{LunB92} applied to our function $F(z)$. (See also~\cite[Example 4.2.10]{Ste93}). The new integration interval leads to a different choice of transformation $\phi_t$, for which it is straightforward to gainfully utilize the $\exp{2\omega \Re(z)}$ term. It results in the exponential square-root decay
  \begin{equation*}
    \bigg| \int_{0}^{\infty}{F(z) \dif{z}}  - h\sum_{k=-m}^{n}{F(\exp{kh}) \exp{kh}} \bigg| \le M\exp{-\sqrt{2\pi\delta\alpha m}}.
  \end{equation*}
  By \cref{eq:F_bound} we have
  \begin{equation*}
    \bigg| \int_{0}^{T}{F(z) \dif{z}} - \int_{0}^{\infty}{F(z) \dif{z}}\bigg| \le \frac{M T^{-2\alpha} \exp{-2\omega T}}{2\omega},
  \end{equation*}
  and we thus get exponential square-root decay except for a small constant term, if $T$ is large. We note, however, that if $T$ is large it might be more worthwhile to consider the ALE with $T = \infty$ directly, rather than the DLE.
\end{remark}

\begin{remark}
Similar results are expected to hold in the nonautonomous case, i.e. when $A$, $B$ and $C$ may depend on $t$. If the operators $A(t)$ all generate analytic semigroups with the same domain $\domain{A(t)} = D$ and the map $t \mapsto A(t): [0, T] \to \cL(D, H)$ is sufficiently nice (H\"{o}lder continuous, with $D$ having the graph norm) then there is a evolution system $U(t, s)$ satisfying 
$\ddt U(t,s) = A(t) U(t, s)$ and $\nrm{(\omega I - A(t))^{\alpha}U(t,s)x} \le \frac{M}{(t-s)^{\alpha}}$ for $x \in D$. See e.g.~\cite[Section 5.6]{Paz83}. It can then be verified by differentiation that the function
\begin{equation*}
  P(t) = \int_{0}^{t}{U(t, s)^* C(s)^* C(s) U(t, s) \dif s}
\end{equation*}
solves the DLE $\dot{P}(t) = A(t)^* P(t) + P(t) A(t) + C(t)^*C(t)$, $P(0) = 0$. We can thus follow the same program as in the autonomous case if we can guarantee that $C(s) (\omega I - A(t))^{-\alpha} \in \cL(H, Y)$ with $\alpha < 1/2$ for $s$ near $t$, since then $\nrm{C(s) U(t, s) x} \le \frac{M}{(t-s)^{\alpha}}$. 
A simple example of when such a condition would hold is when the time dependency is of the form $A(t) = \kappa(t) \tilde{A}$, $C(t) = \lambda(t) \tilde{C}$, where $\tilde{A}$ and $\tilde{C}$ are fixed operators and the functions $\kappa, \lambda$ are continuous and bounded away from zero. Then it is clear that if $\tilde{A}$ and $\tilde{C}$ satisfies the assumptions for the autonomous case, also the above condition is fulfilled.
\end{remark}

A non-zero operator $G$ makes the situation more delicate. If $G$ is a finite-rank operator, then the above result is essentially just shifted by $\rank G$. For consistency, we formulate this in terms of the output space $Z$:
\begin{theorem} \label{thm:DLE_decay_finite_rank}
 Let \cref{assumption:A,assumption:C,assumption:G} be satisfied, with the output spaces $Y$ and $Z$ both having finite nonzero dimension. Then the singular values of the solution $P$ to the DLE \cref{eq:DLE_int} satisfy
 \begin{equation*}
   \sigma_k(P(t)) \le Mt^{1-2\alpha} \exp{-\eta \sqrt{k - 2\dim{Y} - \dim{Z}} },
 \end{equation*}
 for $k \ge \max(1, 4\dim{Y} + \dim{Z})$, where $M$ and $\eta$ are positive constants independent of $t$ but dependent on $\alpha$.
\end{theorem}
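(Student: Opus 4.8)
The plan is to split the closed-form solution \cref{eq:DLE_int} into the contribution from the initial data and the contribution from the integral term, and to treat the former as a finite-rank perturbation of the latter. Concretely, I would write $P(t) = P_G(t) + P_C(t)$, where
\begin{equation*}
  P_G(t) = \etAs G^*G \etA, \qquad P_C(t) = \int_0^t \eAs{z} C^*C \eA{z} \dif{z},
\end{equation*}
so that $\iprod{P_G(t)x}{y} = \iprod{G\etA x}{G\etA y}$ while $\iprod{P_C(t)x}{y}$ equals the integral term in \cref{eq:DLE_int}. The key observation is that $P_C(t)$ is precisely the operator whose singular values are bounded in \cref{thm:DLE_decay_G0}: that result is proved for $G = 0$, in which case $P = P_C$, so its conclusion applies verbatim to the present $P_C(t)$ and gives $\sigma_k(P_C(t)) \le M t^{1-2\alpha}\exp{-\eta\sqrt{k - 2\dim Y}}$ for $k \ge 4\dim Y$.

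Next I would quantify the rank of $P_G(t)$. Since $G : H \to Z$ and $\dim Z < \infty$, the operator $G^*G$ factors through $Z$, whence $\rank{G^*G} \le \dim Z$; conjugation by the bounded operators $\etA$ and $\etAs$ cannot raise this, so $\rank{P_G(t)} \le \dim Z$. In particular $P_G(t)$ is finite-rank, hence compact, and since $P_C(t)$ is compact as well (being a norm-limit of the finite-rank approximants constructed in the proof of \cref{thm:DLE_decay_G0}), the sum $P(t)$ is a compact self-adjoint operator possessing a genuine decreasing singular value sequence.

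The core step is then the classical subadditivity inequality for singular values of compact operators, $\sigma_{i+j-1}(S+T) \le \sigma_i(S) + \sigma_j(T)$. Applying it with $S = P_C(t)$, $T = P_G(t)$ and $j = \dim Z + 1$, the term $\sigma_{\dim Z + 1}(P_G(t))$ vanishes because $\rank{P_G(t)} \le \dim Z$, which yields $\sigma_{i + \dim Z}(P(t)) \le \sigma_i(P_C(t))$. Substituting the bound for $P_C$ and setting $k = i + \dim Z$ gives
\begin{equation*}
  \sigma_k(P(t)) \le M t^{1-2\alpha}\exp{-\eta\sqrt{k - 2\dim Y - \dim Z}},
\end{equation*}
valid whenever $i = k - \dim Z \ge 4\dim Y$, i.e. $k \ge 4\dim Y + \dim Z$; since $\dim Y, \dim Z \ge 1$ this already covers the stated range $k \ge \max(1, 4\dim Y + \dim Z)$. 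I expect no serious obstacle here: the argument is essentially a bookkeeping exercise layered on top of \cref{thm:DLE_decay_G0}. The only points requiring a little care are verifying compactness of the operators involved so that the singular value calculus is legitimate, and tracking the index shift so that both the region of validity and the constants emerge exactly as claimed.
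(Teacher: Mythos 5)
Your proof is correct, but it is not the route the paper's own proof takes. The paper proves the theorem by modifying the sinc-quadrature approximant from the proof of \cref{thm:DLE_decay_G0} directly, setting $P_m = \exp{tA^*}G^*G\exp{tA} + h\sum_{k=-m}^{n}{w_k \exp{z_kA^*}C^*C\exp{z_kA}}$, so that the $G$-term is reproduced exactly, the approximation error $\nrm{P(t)-P_m(t)}$ is the same quadrature error as before, and the rank bound becomes $k_m = \dim Z + (2m+2)\dim Y$; the shift by $\dim Z$ then falls out of the same index bookkeeping. Your argument --- splitting $P(t) = P_G(t) + P_C(t)$, noting $\rank{P_G(t)} \le \dim Z$, and applying the Ky Fan/Weyl inequality $\sigma_{i+j-1}(S+T) \le \sigma_i(S) + \sigma_j(T)$ with $j = \dim Z + 1$ so that the $P_G$ contribution vanishes --- is precisely the \emph{alternative} proof that the paper itself sketches in the paragraph immediately following its proof, and your execution of it is sound: the compactness checks, the rank bound, the index shift, and the resulting range $k \ge 4\dim Y + \dim Z = \max(1, 4\dim Y + \dim Z)$ all come out exactly as claimed, with the same constants $M$, $\eta$ as in \cref{thm:DLE_decay_G0}. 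The trade-off between the two: the paper's version is self-contained, never needs compactness or singular-value calculus for sums of operators, and reuses the quadrature machinery it has already built; yours is more modular, treating \cref{thm:DLE_decay_G0} as a black box, and it is the argument that actually generalizes when $G$ fails to have finite rank --- indeed the Weyl-inequality route is exactly how the paper handles \cref{thm:DLE_decay_Gfull}, where only decay of $\sigma_k(\exp{tA})$ or $\sigma_k(G)$ is assumed.
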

\begin{proof}
  This follows by the same procedure as in the proof of \cref{thm:DLE_decay_G0} after changing the definition of $P_m$ to 
  \begin{equation*}
    P_m = \exp{tA^*}G^*G \exp{tA} + h\sum_{k=-m}^{n}{w_k \exp{z_kA^*}C^*C \exp{z_kA} }.
  \end{equation*}
  In this case, $k_m = \dim{Z} + (2m + 2)\dim{Y}$.
\end{proof}

  As an alternative proof, we may make use of the well-known Weyl's inequality (also known as the Ky Fan inequality): Let $F_1$ and $F_2$ be two compact operators on $H$ with singular values $\{\sigma^1_k\}_{k=1}^{\infty}$ and $\{\sigma^2_k\}_{k=1}^{\infty}$, respectively. Denote the singular values of $F_1 + F_2$ by $\{\sigma_k\}_{k=1}^{\infty}$. Then $\sigma_{j+k-1} \le \sigma^1_j + \sigma^2_k$ for all positive integers $j$ and $k$ \cite{Fan51}.
 If $\dim{Z} < \infty$, then $G$ and $\exp{tA^*}G^*G \exp{tA}$ are both compact operators whose singular values are zero except for the first $\dim{Z}$ ones. The operator $\int_{0}^{t}{\exp{sA^*}C^*C \exp{sA} \ds }$ is also compact, since it is the limit of a sequence of finite-rank operators (see the first part of the proof for \cref{thm:DLE_decay_G0}). Hence Weyl's inequality applies, which shifts the start of the exponential decay by $\dim{Z}$.

Finally, we consider the case where $G$ is a general operator. To handle the term $\exp{tA^*}G^*G \exp{tA}$ we then have to impose stricter requirements on the semigroup $\exp{tA}$ and, by extension, its generator $A$. Alternatively, we may require that the singular values of $G$ decay sufficiently fast.

\begin{theorem} \label{thm:DLE_decay_Gfull}
 Let \cref{assumption:A,assumption:C,assumption:G} be satisfied, with the output space $Y$ having finite dimension $\dim Y \ge 1$ and $\dim Z = \infty$. If the singular values of the solution operator $\exp{tA}$ decay exponentially in the square root, $\sigma_k(\exp{tA}) \le \tilde{M} \exp{-\tilde{\eta}(t) \sqrt{k}}$, then the singular values of the solution $P$ to the DLE \cref{eq:DLE_int} satisfy
 \begin{equation*}
   \sigma_k(P(t)) \le M\max(1, t^{1-2\alpha}) \exp{-\frac{1}{2}\min(\eta, 2\tilde{\eta}(t)) \sqrt{k + 1 - 2\dim{Y} } },
 \end{equation*}
 for $k \ge 6\dim{Y} - 1$,  where $M$ and $\eta$ are positive constants independent of $t$ but dependent on $\alpha$. If instead $\sigma_k(G) \le \tilde{M}\exp{-\tilde{\eta} \sqrt{k}}$, then the same bound holds but without the time dependence in the exponent.
\end{theorem}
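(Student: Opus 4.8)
The plan is to split the closed-form solution \cref{eq:DLE_int} into its two natural summands,
\[
  P(t) = F_1 + F_2, \qquad F_1 = \exp{tA^*}G^*G\exp{tA}, \quad F_2 = \int_{0}^{t}{\exp{sA^*}C^*C\exp{sA} \ds},
\]
bound the singular values of each separately, and recombine them with Weyl's (Ky Fan) inequality $\sigma_{j+k-1}(F_1+F_2) \le \sigma_j(F_1) + \sigma_k(F_2)$. The piece $F_2$ is precisely the $G=0$ solution of \cref{thm:DLE_decay_G0}, so I can quote $\sigma_k(F_2) \le Mt^{1-2\alpha}\exp{-\eta\sqrt{k-2\dim Y}}$ for $k \ge 4\dim Y$ directly, and both summands are compact. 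The only new work is the infinite-rank term $F_1$, which is exactly where the two extra hypotheses must be used.

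For $F_1$ I would write $F_1 = (G\exp{tA})^*(G\exp{tA})$, so that $F_1$ is self-adjoint and positive and $\sigma_j(F_1) = \sigma_j(G\exp{tA})^2$. Since $G$ and $\exp{tA}$ are both bounded, the submultiplicativity estimates $\sigma_j(G\exp{tA}) \le \nrm{G}\,\sigma_j(\exp{tA})$ and $\sigma_j(G\exp{tA}) \le \sigma_j(G)\,\nrm{\exp{tA}}$ apply. Under the first hypothesis this gives $\sigma_j(F_1) \le \nrm{G}^2\tilde M^2\exp{-2\tilde\eta(t)\sqrt{j}}$, and under the second $\sigma_j(F_1) \le \tilde M^2\nrm{\exp{tA}}^2\exp{-2\tilde\eta\sqrt{j}}$. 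The squaring is what converts the rate $\tilde\eta$ into $2\tilde\eta$ in the exponent, and it is also why in the second case the exponent carries no $t$-dependence (only the prefactor does, through $\nrm{\exp{tA}}$).

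The decisive step is the recombination. Writing $c = \min(\eta, 2\tilde\eta(t))$, both contributions are dominated by the single rate $c$. For a target index $N$ I would use the \emph{balanced} split $j+k-1 = N$ with $j = k - 2\dim Y$, i.e.\ $k = (N+1+2\dim Y)/2$ and $j = (N+1-2\dim Y)/2$ up to rounding, which makes the two square-root arguments $\sqrt{j}$ and $\sqrt{k-2\dim Y}$ coincide and equal to $\sqrt{(N+1-2\dim Y)/2}$. The admissibility condition $k \ge 4\dim Y$ from \cref{thm:DLE_decay_G0} then reads exactly $N \ge 6\dim Y - 1$, the stated range, and the crude estimate $\sqrt{(N+1-2\dim Y)/2} \ge \tfrac12\sqrt{N+1-2\dim Y}$ produces the factor $\tfrac12$; summing the two prefactors yields the $M\max(1,t^{1-2\alpha})$ in front.

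I expect the main difficulty to be pure bookkeeping rather than analysis: carrying the integer rounding of the balanced split through Weyl's inequality and verifying that the constraints $k \ge 4\dim Y$ (for $F_2$) and $j \ge 1$ (for $F_1$) hold simultaneously at the threshold $N = 6\dim Y - 1$. The genuinely analytic ingredients — boundedness of $G$, submultiplicativity of singular values under composition with bounded operators, and compactness of both summands — are standard; all the care goes into balancing the two decay rates so that the worse of $\eta$ and $2\tilde\eta(t)$ governs the final bound.
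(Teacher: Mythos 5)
Your proposal is correct and takes essentially the same route as the paper's own proof: the identical splitting $P(t) = \exp{tA^*}G^*G\exp{tA} + \int_{0}^{t}{\exp{sA^*}C^*C\exp{sA}\ds}$, the bound on the first summand with the doubled rate $2\tilde{\eta}$ coming from squaring, and Weyl's inequality with the same balanced index split $j = k - 2\dim Y$, which produces the threshold $k \ge 6\dim Y - 1$ and the factor $\tfrac12$ in the exponent exactly as in the paper. The only cosmetic difference is that you make explicit the submultiplicativity argument $\sigma_j(\exp{tA^*}G^*G\exp{tA}) = \sigma_j(G\exp{tA})^2 \le \min\bigl(\nrm{G}^2\sigma_j(\exp{tA})^2,\ \sigma_j(G)^2\nrm{\exp{tA}}^2\bigr)$, which the paper compresses into a single ``clearly.''
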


\begin{proof}
The extra assumption on $\exp{tA}$ in particular implies that $\exp{tA}$ is compact, and since $G$ is a bounded also $\exp{tA^*}G^*G \exp{tA}$ is compact. Further, the singular values clearly satisfy $\sigma_k(\exp{tA^*}G^*G \exp{tA}) \le \hat{M}\exp{-2\tilde{\eta} \sqrt{k}}$ for some constant $\hat{M}$. We may therefore apply Weyl's inequality as in the paragraph after the proof of \cref{thm:DLE_decay_finite_rank}. By \cref{thm:DLE_decay_G0} this directly yields
\begin{align*}
  \sigma_{2k-2\dim{Y} - 1}(P(t)) &= \sigma_{k + (k - 2\dim{Y}) - 1}(P(t))\\
 &\le Mt^{1-2\alpha} \exp{-\eta \sqrt{k - 2\dim{Y}}} + \hat{M} \exp{-2\tilde{\eta}(t) \sqrt{k - 2\dim{Y}}}\\
                      &\le 2\max(Mt^{1-2\alpha}, \hat{M})\exp{-\min(\eta, 2\tilde{\eta}(t)) \sqrt{k - 2\dim{Y}}},
\end{align*}
and thus
\begin{equation*}
  \sigma_{j}(P(t)) \le 2\max(Mt^{1-2\alpha}, \hat{M})\exp{-\frac{1}{2}\min(\eta, 2\tilde{\eta}(t)) \sqrt{j + 1 - 2\dim{Y}}},
\end{equation*}
for all $j \ge 6\dim{Y}-1$. For the second case, we note that the assumption implies that 
\begin{equation*}
  \sigma_k(\exp{tA^*}G^*G \exp{tA}) \le \hat{M}\exp{-2\tilde{\eta} \sqrt{k}},
\end{equation*}
with a different constant $\hat{M}$, due to the exponential boundedness of $\exp{tA}$. We may thus apply Weyl's inequality in exactly the same way.
\end{proof}

\begin{remark}
  When $A$ is diagonalizable, the assumption on $\exp{tA}$ obviously means that the eigenvalues of $A$ should go to $-\infty$ like the negative square root. This assumption is satisfied in many concrete applications. As an example, the Laplacian on $\Omega \subset \R^d$  with Dirichlet or Neumann boundary conditions has eigenvalues $\lambda_k(A)$ that decrease as $\lambda_k(A) = \cO(-k^{2/d})$ by Weyl's law, see e.g.~\cite[Chapter VI]{CouH53}. Hence the assumption is satisfied for such problems of up to dimension $4$.
\end{remark}

\section{Riccati equations} \label{sec:DRE}

As in~\cite{Opm15}, we may extend the Lyapunov results to the Riccati case by using a factorization into output and input-output maps. For this, we will employ the framework of well-posed systems advocated by Salamon~\cite{Sal87} and Staffans~\cite{Sta05}, see also~\cite{Mik02, TucW09}. 
As in \cref{sec:DLE} we first consider the case of a zero initial condition, then extend this to the finite-rank case and finally to the case of a general $G$ but with extra requirements on $A$.

\begin{theorem} \label{thm:DRE_decay}
  Let \cref{assumption:A,assumption:B,assumption:C,assumption:G} be satisfied, with the output spaces $Y$ and $Z$ having finite nonzero dimension. Then if $G=0$, the singular values of the solution $P$ to the DRE \cref{eq:DRE_int} satisfy
 \begin{equation*}
   \sigma_k(P(t)) \le M t^{1-2\alpha} \exp{-\eta \sqrt{k - 2\dim{Y}} },
 \end{equation*}
 for $k \ge 4\dim{Y}$.
If $G \neq 0$ but $\dim{Z} < \infty$ we instead get
\begin{equation*}
   \sigma_k(P(t)) \le M t^{1-2\alpha} \exp{-\eta \sqrt{k - 2\dim{Y} - \dim{Z}} },
 \end{equation*}
 for $k \ge 4\dim{Y} + \dim{Z}$.
If $\dim{Z} = \infty$ and $\sigma_k(\exp{tA}) \le \tilde{M} \exp{-\tilde{\eta}(t) \sqrt{k}}$, then
 \begin{equation*}
   \sigma_{k(P(t))} \le M \max(1, t^{1-2\alpha}) \exp{-\frac{1}{2}\min(\eta, 2\tilde{\eta}(t)) \sqrt{k + 1 - 2\dim{Y} } },
 \end{equation*}
 for $k \ge 6\dim{Y} - 1$. Finally, if $\dim{Z} = \infty$ and $\sigma_k(G) \le \tilde{M}\exp{-\tilde{\eta} \sqrt{k}}$, then the last bound still holds, but without the time dependency in the exponent.
 In all the cases above, $M$ and $\eta$ are positive constants independent of $t$ but dependent on $\alpha$.
\end{theorem}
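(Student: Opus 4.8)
The plan is to reduce the Riccati case to the already-established Lyapunov results by means of an LQR factorization, exactly in the spirit of~\cite{Opm15}. The key point is that the DRE solution is the optimal-cost operator of the associated finite-horizon control problem, and that this operator can be written as a \emph{contraction-compression} of the open-loop observability Gramian. Since that Gramian is precisely the DLE solution~\cref{eq:DLE_int}, whose singular value decay is quantified in \cref{sec:DLE}, the desired bounds will follow from a comparison of singular values.

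Concretely, I would first introduce (in the present, time-reversed convention) the open-loop output map $\cC \colon H \to L^2(0,t;Y)$, $(\cC x)(s) = C\exp{sA}x$, together with the input-output map $\cD$ and the input-to-state map $\cB$. The step where the Salamon/Weiss/Staffans well-posedness framework~\cite{Sta05} and \cref{assumption:B,assumption:C,assumption:G} are genuinely used is the verification that $\cC$, $\cD$ and $G\cB$ are \emph{bounded} operators. When $G \neq 0$ I would augment the output space by $Z$ and set $\tcC x = (\cC x,\, G\exp{tA}x)$ and $\tcD u = (\cD u,\, G\cB u)$, so that $\tcC^*\tcC = \int_0^t \exp{sA^*}C^*C\exp{sA}\,\ds + \exp{tA^*}G^*G\exp{tA}$ is exactly the DLE solution. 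Completing the square in the quadratic cost functional then produces the factorization $P(t) = \tcC^* W \tcC$ with $W = (I + \tcD\tcD^*)^{-1}$.

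The decisive observation is that $W$ is a contraction: since $\tcD\tcD^* \ge 0$ we have $I + \tcD\tcD^* \ge I$ and hence $0 \le W \le I$. Writing $P(t) = (W^{1/2}\tcC)^*(W^{1/2}\tcC)$ and combining the elementary identities $\sigma_k(P(t)) = \sigma_k(W^{1/2}\tcC)^2$ with $\sigma_k(W^{1/2}\tcC) \le \nrm{W^{1/2}}\,\sigma_k(\tcC) \le \sigma_k(\tcC)$ yields the comparison
\begin{equation*}
  \sigma_k(P(t)) \le \sigma_k(\tcC)^2 = \sigma_k(\tcC^*\tcC),
\end{equation*}
valid for every $k$, where the right-hand side consists of the singular values of the DLE solution. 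Each of the four stated bounds is then immediate by invoking the corresponding Lyapunov theorem: \cref{thm:DLE_decay_G0} when $G=0$, \cref{thm:DLE_decay_finite_rank} when $0 \neq \dim Z < \infty$, and the two parts of \cref{thm:DLE_decay_Gfull} when $\dim Z = \infty$.

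I expect the real work to lie entirely in the first step, namely establishing the factorization rigorously in the relatively unbounded setting. Because $B$ (and $C$) are only relatively bounded, the maps $\cD$ and $G\cB$ are not obviously bounded, and $(I + \tcD\tcD^*)^{-1}$ must be shown to exist as a bounded operator; this is exactly where the restriction $\alpha < \min(1-\beta, 1/2)$ and the smoothing hypothesis on $G$ in \cref{assumption:G} are needed, and where one must appeal to admissibility and well-posedness results rather than argue by hand. Once the factorization and the contraction bound $0 \le W \le I$ are in place, the singular value comparison and the reduction to \cref{sec:DLE} are routine.
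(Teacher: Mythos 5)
Your proposal is correct and follows essentially the same route as the paper: the Salamon-type factorization $P(t) = \tcC_t^*(\cI + \tcD_t\tcD_t^*)^{-1}\tcC_t$ with the $G$-augmented output maps, boundedness of $\cC_t$, $\cD_t$, $\cD_{G,t}$ via the Staffans well-posedness results, and then reduction to \cref{thm:DLE_decay_G0,thm:DLE_decay_finite_rank,thm:DLE_decay_Gfull} since $\tcC_t^*\tcC_t$ is exactly the DLE solution. Your explicit use of the contraction property $0 \le (\cI + \tcD_t\tcD_t^*)^{-1} \le \cI$ is a marginally sharper phrasing of the paper's observation that multiplying by this bounded operator only rescales the singular values, but it is not a different argument.
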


\begin{remark}
As in \cref{remark:shift}, we can shift the decay to start at $k = 1$ by increasing the constant $M$, at the expense of a worse bound in the interval given above.
\end{remark}

\begin{proof}
Let the output and input-output mappings $\cC_t$ and $\cD_t$ be given by
\begin{equation*}
  (\cC_t x_0)(s) = C\exp{s A} x_0 \quad \text{and} \quad (\cD_t u)(s) = \int_{0}^{s}{C\exp{(s-\tau)A}Bu(\tau)\dif{\tau}}.
\end{equation*}
By \cite[Theorem 5.7.3]{Sta05}, these mappings satisfy $\cC_t \in \cL(H, L^2([0,t], Y))$ and $\cD_t \in \cL(L^2([0,t], U), L^2([0,t], Y))$, due to \cref{assumption:B,assumption:C}.
When $G = 0$ we can then directly apply the result of Salamon~\cite[Theorem 5.1]{Sal87}, which (in our notation) states that 
\begin{equation*}
  P(t) = \cC_t^* \big( \cI + \cD_t\cD_t^* \big)^ {-1} \cC_t.
\end{equation*}
Here, $\cI$ denotes the identity operator on $L^2([0,t], Y)$, and the inverse of $\cI + \cD_t\cD_t^*$ exists as a bounded self-adjoint operator by the Lax-Milgram lemma.
A straightforward calculation shows that $\cC_t^*$ is given by $\cC_t^*u = \int_{0}^{t}{\exp{sA^*} C^* u(s) \ds}$, and we get
\begin{equation*}
  \cC_t^*\cC_t x_0 = \int_{0}^{t}{\exp{sA^*} C^* C \exp{sA} x_0 \ds}.
\end{equation*}
Thus, in fact, for $x, y \in \domain{A}$ we have $\iprod{\cC_t^*\cC_tx}{y} = F(t)$ with $F$ defined by \cref{eq:F_def}. Hence the singular values of $\cC_t^*\cC_t$ decay exponentially in the square root, by exactly the same reasoning as in the proof of \cref{thm:DLE_decay_G0}. Multiplying $\cC_t^*\cC_t$ by the bounded operator $(\cI + \cD\cD^*)^{-1}$ only scales the singular values by the factor $\nrm{(\cI + \cD\cD^*)^{-1}}$, so we have thus proven the first assertion.

The argument in~\cite[Theorem 5.1]{Sal87} may be extended also to the more general case that $G \neq 0$. We instead get
\begin{align*}
  P(t) &=  \cC_{G,t}^*\cC_{G,t} + \cC_t^*\cC_t  \\
       &- \big( \cC_{G,t}^*\cD_{G,t} + \cC_t^*\cD_t \big) \big( \cI + \cD_t^*\cD_t + \cD_{G,t}^*\cD_{G,t}\big)^ {-1} \big( \cD_{G,t}^*\cC_{G,t} + \cD_t^*\cC_t \big),
\end{align*}
where 
\begin{equation*}
  \cC_{G,t} x_0 = G \exp{tA} x_0 \quad \text{and} \quad \cD_{G,t} u = G\lim_{s \to t} \int_{0}^{s}{\exp{(s-\tau)A}Bu(\tau)\dif{\tau}}
\end{equation*}
are the ``final-state'' versions of the $\cC$ and $\cD$ operators. By \cite[Theorem 5.7.3]{Sta05} and  \cite[Theorem A.3.7(ii)]{Sta05}, the input-output operator $t \mapsto \int_{0}^{t}{G\exp{(t-s)A}Bu(s)\ds}$ maps $u \in L^2([0,t], U)$ into $C([0, t], Z)$ under \cref{assumption:G}, and $\cD_{G,t}$ is therefore well-defined.

 Recall that the problem is stated on $t \in [0, T]$. For any such $t$, we define the product space $X_t = L^2([0,t], Y) \times Z$ with the induced topology
\begin{equation*}
  \nrm{
    \begin{bmatrix}
      y\\
           z
    \end{bmatrix}
  }_{X_t}^2
  = \nrm{y}_{L^2([0,t], Y)}^2 + \nrm{z}_Z^2.
\end{equation*}
Further let the operators $\tcC_t : H \to X_t$ and $\tcD_t : L^2([0,t], U) \to X_t$ be defined by
\begin{equation*}
  \tcC_t =
  \begin{bmatrix}
    \cC_t \\
    \cC_{G,t} 
  \end{bmatrix}
  \quad \text{and} \quad
  \tcD_t =
  \begin{bmatrix}
    \cD_t \\
    \cD_{G,t} 
  \end{bmatrix}.
\end{equation*}
Then clearly $\tcC_t$ and $\tcD_t$ are linear and bounded with adjoints $\tcC_t^* : X_t \to H$ and $\tcD_t^* : X_t \to L^2([0,t], U)$ given by 
\begin{equation*}
  \tcC_t^* =
  \begin{bmatrix}
    \cC_t^* & \cC_{G,t}^* 
  \end{bmatrix}
  \quad \text{and} \quad 
  \tcD_t^* =
  \begin{bmatrix}
    \cD_t^* & \cD_{G,t}^* 
  \end{bmatrix}
  .
\end{equation*}
 It follows that we can factorize the above expression for $P(t)$ as
\begin{equation*}
  P(t) = \tcC_t^* \big( \cI + \tcD_t\tcD_t^* \big)^ {-1} \tcC_t.
\end{equation*}
Hence, the singular value decay of $P(t)$ is the same as that of $\tcC_t^* \tcC_t \in \cL(H)$, i.e.\ of $\cC_t^*\cC_t + \cC_{G,t}^*\cC_{G,t} = \cC_t^*\cC_t + \exp{tA^*} G^*G \exp{tA}$. 
Applying Weyl's inequality with either the assumption that $\dim{Z} < \infty$ or that the singular values of $\exp{tA}$ or $G$ decay sufficiently fast yields the second, third and fourth assertions, as in the proofs of \cref{thm:DLE_decay_finite_rank,thm:DLE_decay_Gfull}.
\end{proof}

\begin{remark}\label{remark:weighting}
  The above theorem extends to the case of a more general cost functional with a coercive weighting term $
  \begin{bmatrix}
    Q & N \\
    N^* & R
  \end{bmatrix}  $ in much the same way as~\cite{Opm15}. Since $N=0$ in most practical applications and $Q$ and $R$ may be included in $C$ and $B$, respectively, we choose to omit this from the theorem and proof in order to simplify the notation.
\end{remark}

We note that while we have only shown that the given assumptions are sufficient for fast decay of the singular values, we do not claim that they are necessary conditions. Nevertheless, violating one of the assumptions generally either leads to a not well-defined problem or slow decay. See e.g.~\cite{Opm15} for a number of examples in the algebraic setting. As an additional example, consider the advection equation $\ddt x(t, \xi) = \frac{\dif}{\dif{\xi}} x(t, \xi)$ on $\xi \in (0, \infty)$, with $x(0, \xi) = x_0(\xi)$. The solution is given by $x(t, \xi) = x_0(t)$, i.e.\ it simply shifts the initial condition to the left. If the output operator $C$ is the trace of $x$ at $0$, the output map is given by $\cC_tx_0 = x_0(\cdot)$. This means that
\begin{equation*}
  \nrm{\cC_tx_0}_{L^2([0,t], Y)}^2 = \nrm{x_0}_{L^2([0,t], H)}^2,
\end{equation*}
and $\cC_t$ is therefore a partial isometry for any $t > 0$. Since $H$ is infinite-dimensional, $\cC_t$ has infinitely many singular values that are equal to $1$. The solution to the corresponding differential Lyapunov equation therefore exhibits no decay of its singular values at all. The main problem here is the lack of analyticity of the operator $\frac{\dif}{\dif{\xi}}$. (Cf.~\cite[Section 8.7A]{LasT00b}.)

On the other hand, analyticity is sometimes not strictly necessary when $B$ and $C$ are bounded operators. This is demonstrated for the algebraic case in~\cite{CurS01}, which shows that the solution is nuclear. That means that $\sum_{k = 1}^{\infty}{\sigma_k(P)} < \infty$, i.e.\ the singular values decay to zero at least as fast as $1/k$, but not necessarily as fast as $\exp{- \gamma \sqrt{k}}$. (These results extend to the differential case.)

\section{Numerical experiments} \label{sec:experiments}
To demonstrate the applicability of the bounds proposed in \cref{thm:DLE_decay_G0,thm:DLE_decay_finite_rank,thm:DLE_decay_Gfull,thm:DRE_decay} we have performed a few numerical experiments. 
In all cases, we consider DRE/DLEs arising from LQR problems with the state and output equations given by
\begin{align}
  \dot{x} &= A x + B u, \quad x(0) = x_0, \label{eq:state}\\
  y &= C x, \label{eq:output}
\end{align}
The solution $P$ to the DRE associated with the operators $A$, $B$ and $C$ yields the optimal input function $u^{\text{opt}}$ in feedback form; $u^{\text{opt}}(t) = - B^* P(T-t) x(t)$. It is optimal in the sense that it minimizes the cost functional
\begin{equation*}
  J(u) = \int_{0}^{T} { \nrm{y}_Y^2 + \nrm{u}_U^2 \,\dt} + \nrm{G x(T)}_Z^2.
\end{equation*}
The aim is thus to drive the output $y$ to zero while being mindful of the cost $\nrm{u}^2$ of doing so. In the extended case mentioned in \cref{remark:weighting}, the weighting factors scale the relative costs of $y$ and $u$, respectively. When $B = 0$, the solution to the corresponding DLE yields the observability Gramian, an indicator of which states $x$ that can be detected by using only the output $y$.

In all the following examples we consider the domain $\Omega = [0, 1]^2$ to be the unit square, with boundary $\Gamma$. We further let the state space be $H = L^2(\Omega)$ except where otherwise noted. We choose $A = \Delta : \domain{A} \subset H \to H$ to be the Laplacian. Since we will vary the boundary conditions, its domain will change as well. We can, however, always consider it to be generated by the inner product $a(u,v) = \int_{\Omega}{ \nabla u \cdot \nabla v}$, where $u, v \in V = \domain{(-A)^{1/2}}$. In the case of homogeneous Dirichlet boundary conditions, we have $\domain{A} = H^2 \cap H^1_0(\Omega)$ and $V = H^1_0(\Omega)$. We note that \cref{assumption:A} is satisfied, with the region of analyticity being the entire right halfplane.

Since we cannot investigate the infinite-dimensional case in finite precision arithmetic, a discretization of the equation is required. For the spatial discretization, we have used the finite element method based on the inner product $a$. For a given mesh size $h$, we get the finite element space $V_h \subset V \subset H$ and the approximate solution $P_h$ is an operator from $V_h$ to $V_h$. We may, however, extend it to an operator on $H$ by forming $\mathcal{I}_h P_h \mathcal{P}_h$ where $\mathcal{I}_h : V_h \to H$ denotes the identity operator and $\mathcal{P}_h : H \to V_h$ is the $a$-orthogonal projection onto the finite element space. For a detailed account of the resulting matrix-valued equations, see e.g.~\cite[Section 5]{MalPS18}. We generate the respective matrices here by using the library FreeFem++~\cite{Hec12}, with $P2$ conforming finite elements unless otherwise noted.

 Further, since the discretized DLE/DREs are matrix-valued and their solutions are typically dense, it is not feasible to simply transform these into vector-valued ODEs and solve them directly. We use instead the MATLAB package DREsplit\footnote{Available from the author via email on request, or from \url{www.tonystillfjord.net}.} developed by the author to compute accurate low-rank approximations to the solutions. The reported singular values are thus not exact, but the integration parameters were chosen in such a way that further refining the temporal discretizations has a negligible effect on the end results. In particular, we used the second-order Strang splitting with $256$ time steps. This requires the computation of many matrix exponential actions, and for this a basic block Krylov subspace method with residual norm tolerance $10^{-4}$ was employed. The relative tolerance for the low-rank approximation was set to the round-off error level. For further details on the use of splitting schemes in this context, see e.g.~\cite{Sti17} or~\cite{Sti15}.

With this said, we want to note that the reported results also provide some insight into how the discretized equations converge to their infinite-dimensional counterparts.

\subsection{Example 1}\label{example:bounded}
We consider first the bounded Lyapunov case by taking the input operator $B = 0$ and letting the output be the mean of the solution. More specifically, we take $Y = \R$ and set $C : H \to Y$, $C x = \int_{\Omega}{x}$. Then clearly $\nrm{Cx}_{\R} \le \nrm{x}_H$, since $\Omega$ is the unit square. We thus have $\beta = 0$ and $\alpha = 0$. Further setting $G = 0$  implies that \cref{assumption:B,assumption:C,assumption:G} are satisfied. To complete the specification of $A$, we choose homogeneous Dirichlet boundary conditions.

We computed the singular values for a number of different spatial discretizations, starting with a grid that has $N = 9$ internal nodes and refining this 6 times. Each refinement roughly halves the mesh size and thus roughly quadruples the number of nodes, leading to meshes with $N = 9, 49, 225, 961, 3969, 16129$ and $65025$ internal nodes, respectively.
\Cref{fig:ex01} shows the computed singular values of the solutions (the $\LHH$-extended operators, not the matrices) for different spatial discretizations, at the final time $T = 0.1$. The curves are ordered in size from bottom to top, i.e.\ the lowermost curve corresponds to the $N = 9$ discretization, while the topmost corresponds to the $N = 65025$ discretization. We observe that while the initial decay is very much exponential in nature, when we refine the discretization the decay worsens and tends to the exponential square root bound. This is precisely the same behaviour as as seen in the algebraic case in e.g.~\cite{GruK14}.

\begin{figure}[htbp]
  \centering
%
%
\begin{tikzpicture}

\begin{axis}[%
width=0.761\columnwidth,
height=0.494\columnwidth,
at={(0\columnwidth,0\columnwidth)},
scale only axis,
xmin=0,
xmax=30,
xlabel style={font=\color{white!15!black}},
xlabel={$k$},
ymode=log,
ymin=1e-25,
ymax=1,
yminorticks=true,
ylabel style={font=\color{white!15!black}},
ylabel={$\sigma_k$},
axis background/.style={fill=white},
title style={font=\bfseries},
title={Singular values for different discretizations},
legend style={legend cell align=left, align=left, draw=white!15!black},
xtick distance= 5,
legend pos = north east
]
\addplot [color=black, mark=+, mark options={solid, black}, forget plot]
  table[row sep=crcr]{%
1	0.0161854613032337\\
2	0.000215390188225665\\
3	1.32515579072567e-06\\
4	5.43169372332684e-08\\
5	1.07169988640604e-17\\
6	0\\
};
\addplot [color=black, mark=+, mark options={solid, black}, forget plot]
  table[row sep=crcr]{%
1	0.0167629970555036\\
2	0.000391113456142698\\
3	1.80567540993429e-05\\
4	7.18211630349624e-07\\
5	3.33881758126985e-08\\
6	2.86093572645083e-09\\
7	5.75936502309276e-10\\
8	3.57342519212328e-11\\
9	7.44373512963773e-13\\
10	3.87249662788405e-14\\
11	2.30093503179367e-15\\
12	2.61230526058018e-16\\
13	1.18042498424209e-17\\
14	8.2421007803113e-18\\
15	7.31523001834231e-18\\
16	3.40017284017379e-18\\
17	0\\
18	0\\
};
\addplot [color=black, mark=+, mark options={solid, black}, forget plot]
  table[row sep=crcr]{%
1	0.0167947261921203\\
2	0.000417140118040125\\
3	2.97009347512814e-05\\
4	2.71908476732903e-06\\
5	3.08560354360297e-07\\
6	2.69416593807966e-08\\
7	3.76015781019299e-09\\
8	6.4216974522819e-10\\
9	6.81129301646907e-11\\
10	8.52069956666957e-12\\
11	1.647806231086e-12\\
12	4.7415131633166e-13\\
13	6.11036825424766e-14\\
14	9.95139138724096e-15\\
15	6.55618050315033e-16\\
16	4.48557109970522e-17\\
17	9.84345859021824e-18\\
18	5.17116889634047e-18\\
19	2.44770787113503e-18\\
20	0\\
21	0\\
22	0\\
};
\addplot [color=black, mark=+, mark options={solid, black}, forget plot]
  table[row sep=crcr]{%
1	0.0167961391012838\\
2	0.000417300585796808\\
3	3.2359883083583e-05\\
4	3.99177800517742e-06\\
5	5.64351011179537e-07\\
6	9.9595492113138e-08\\
7	1.66012394334302e-08\\
8	3.34637919593602e-09\\
9	8.49485624526092e-10\\
10	1.42259662637532e-10\\
11	2.53183365130746e-11\\
12	4.66293370991613e-12\\
13	1.31156191511982e-12\\
14	2.61708875567459e-13\\
15	4.0292334266029e-14\\
16	5.64242611347229e-15\\
17	1.7465395948213e-15\\
18	5.62852206594032e-16\\
19	1.61061141253904e-16\\
20	1.76817714273866e-17\\
21	5.79478234219275e-18\\
22	3.32268558594502e-18\\
23	3.06664647831241e-18\\
24	0\\
25	0\\
26	0\\
27	0\\
};
\addplot [color=black, mark=+, mark options={solid, black}, forget plot]
  table[row sep=crcr]{%
1	0.0167962276146634\\
2	0.000417162753025221\\
3	3.23553363887149e-05\\
4	4.26331149476575e-06\\
5	7.45910359816752e-07\\
6	1.45445208871416e-07\\
7	3.22299710399686e-08\\
8	8.53725123264275e-09\\
9	2.40223616073021e-09\\
10	6.79373040787907e-10\\
11	1.76627688802216e-10\\
12	3.77656502303638e-11\\
13	8.32367318099346e-12\\
14	2.41636714188436e-12\\
15	7.52993516433568e-13\\
16	2.313256274125e-13\\
17	5.17318948236514e-14\\
18	1.10092281865893e-14\\
19	2.36193197522551e-15\\
20	6.47352022206208e-16\\
21	1.75075923442364e-16\\
22	3.53545321245855e-17\\
23	1.70459073769892e-17\\
24	1.08259445588994e-17\\
25	5.36672092410053e-18\\
26	5.22627300550371e-18\\
27	3.94672097717131e-18\\
28	3.25909468561849e-18\\
29	2.63116645670249e-18\\
30	0\\
31	0\\
32	0\\
33	0\\
34	0\\
35	0\\
};
\addplot [color=black, mark=+, mark options={solid, black}, forget plot]
  table[row sep=crcr]{%
1	0.0167962330900944\\
2	0.000417149476267125\\
3	3.23346984865921e-05\\
4	4.25924432779738e-06\\
5	7.70863849643722e-07\\
6	1.71577685974585e-07\\
7	4.28995409674669e-08\\
8	1.17375677884358e-08\\
9	3.80932944821284e-09\\
10	1.41637903546781e-09\\
11	4.75592371120149e-10\\
12	1.34412929619433e-10\\
13	3.8505512806228e-11\\
14	1.12479268143287e-11\\
15	3.25266059759794e-12\\
16	1.10421128849998e-12\\
17	3.70408555567187e-13\\
18	9.95322200688045e-14\\
19	2.3479416366295e-14\\
20	6.09287945727971e-15\\
21	1.64356793527049e-15\\
22	4.76365680495926e-16\\
23	1.19832222797503e-16\\
24	2.64394304853216e-17\\
25	5.03143641538427e-18\\
};
\addplot [color=black, mark=+, mark options={solid, black}]
  table[row sep=crcr]{%
1	0.0167962353514286\\
2	0.000417148558625769\\
3	3.23327240043817e-05\\
4	4.25682274459516e-06\\
5	7.69702782477717e-07\\
6	1.73217970537683e-07\\
7	4.60137174534958e-08\\
8	1.39276137589277e-08\\
9	4.73029588072187e-09\\
10	1.77729279628029e-09\\
11	6.64186999130944e-10\\
12	2.32642550975019e-10\\
13	7.56701663880917e-11\\
14	2.24149546481149e-11\\
15	6.42218391676439e-12\\
16	2.02470095855338e-12\\
17	6.76334814771927e-13\\
18	1.90268160521678e-13\\
19	4.6393966215908e-14\\
20	1.07041412912521e-14\\
21	2.59411088154619e-15\\
22	7.41036114415074e-16\\
23	1.95129293506874e-16\\
24	3.84598563983988e-17\\
25	9.69552375931618e-18\\
26	8.93867510025674e-18\\
27	5.53815561033077e-18\\
28	4.67467760346888e-18\\
29	0\\
30	0\\
31	0\\
};
\addlegendentry{$\sigma_k$}

\addplot [color=black, line width=2.0pt]
  table[row sep=crcr]{%
1	2.42323283817815e-05\\
2	3.62798271733641e-07\\
3	5.43169372332683e-09\\
4	8.13214918666667e-11\\
5	1.21751802960088e-12\\
6	1.82282705146832e-14\\
7	2.72907536379885e-16\\
8	4.08587986188511e-18\\
9	6.11724193007253e-20\\
10	9.15852890832989e-22\\
};
\addlegendentry{$C\mathrm{e}^{-\gamma_1 k}$}

\addplot [color=black, dashed, line width=2.0pt]
  table[row sep=crcr]{%
1	0.167962353514286\\
2	0.00703948153441602\\
3	0.000617186364136657\\
4	7.92901554181893e-05\\
5	1.30034593221359e-05\\
6	2.53641126793967e-06\\
7	5.64228649048748e-07\\
8	1.39276137589277e-07\\
9	3.74305825960333e-08\\
10	1.08017041122756e-08\\
11	3.31233955675843e-09\\
12	1.07060190309438e-09\\
13	3.62382716211662e-10\\
14	1.27784902419105e-10\\
15	4.67401783885863e-11\\
16	1.76698923856198e-11\\
17	6.88312243285326e-12\\
18	2.75557829180334e-12\\
19	1.13119804178698e-12\\
20	4.75240203494739e-13\\
};
\addlegendentry{$C\mathrm{e}^{-\gamma_2 \sqrt{k}}$}

\end{axis}
\end{tikzpicture}%
  \caption{The singular values of the solutions computed in \nameref{example:bounded}, at the final time $T = 0.1$. They increase monotonically, and thus the lower-most line corresponds to $N = 9$ while the top-most corresponds to $N = 65025$.}
  \label{fig:ex01}
\end{figure}

\subsection{Example 2}\label{example:DRE_semibdd}
In the second example, we change the boundary conditions of $A$ to be homogeneous Dirichlet on the left edge $\Gamma_L$ and homogeneous Neumann on the top and bottom edges $\Gamma_T$, $\Gamma_B$. On the right edge, $\Gamma_R$, we apply a nonhomogeneous Neumann boundary condition, through which we control the system. That is, we set $U = \R$ and define $B : U  \to \domain{A^*}'$ by $B u = -(AN \mathbbm{1}) u$, where the function $\mathbbm{1} \in L^2(\Gamma_R)$ is constant equal to $1$ everywhere and $N : L^2(\Gamma_R) \to H^{3/2}(\Omega)$ denotes the Neumann operator implicitly defined by $Nv = w$ if $Aw = 0$ in $\Omega$, $\frac{\partial w}{\partial \nu}\rvert_{\Gamma_R} = v$, $w_{\rvert_{\Gamma_L}} = 0$ and $\frac{\partial w}{\partial \nu}\rvert_{\Gamma_T \cap \Gamma_B} = 0$. For further details on this construction, see e.g.~\cite[Section 3]{LasT00}. That $N$ maps into $H^{3/2}(\Omega)$ follows by~\cite[Thm. 8.3]{LioM72} and shows that $(-A)^{-\beta}B \in \cL(U, H)$ for $\beta = 1/4 + \epsilon$, $\epsilon > 0$.

We note that we could equally well take $U = L^2(\Gamma)$ in the continuous setting and let the input $u$ vary along the whole edge. However, for the numerics we would then have to discretize also this function, leading to one more layer of complexity.

As the output, we again use the mean of the solution over the whole domain $\Omega$, meaning that $\alpha = 0$. We discretize the system in the same way as in \nameref{example:bounded}, but because of the three Neumann edges we now have a slightly higher number of degrees of freedom for each level of discretization. The matrices are in this case of size $N = 20, 72, 272, 1056, 4160, 16512$ and $65792$, respectively.

\Cref{fig:ex02_1} shows the computed singular values of the solutions at the final time $T = 0.1$. The curves are again ordered in size from coarse (bottom) to fine (top) discretizations. We note that these results are quite similar to the results in \cref{fig:ex01}, i.e.\ the input operator does not make the situation worse, as predicted by \cref{thm:DRE_decay}.

\begin{figure}[htbp]
  \centering
%
%
\begin{tikzpicture}

\begin{axis}[%
width=0.761\columnwidth,
height=0.494\columnwidth,
at={(0\columnwidth,0\columnwidth)},
scale only axis,
xmin=0,
xmax=25,
xlabel style={font=\color{white!15!black}},
xlabel={$k$},
ymode=log,
ymin=1e-18,
ymax=1,
yminorticks=true,
ylabel style={font=\color{white!15!black}},
ylabel={$\sigma_k$},
axis background/.style={fill=white},
title style={font=\bfseries},
title={Singular values for different discretizations},
legend style={legend cell align=left, align=left, draw=white!15!black},
xtick distance= 5,
legend pos = north east
]
\addplot [color=black, mark=+, mark options={solid, black}, forget plot]
  table[row sep=crcr]{%
1	0.0655398398274146\\
2	0.000578607855219622\\
3	2.44975719475123e-05\\
4	1.2295130350608e-06\\
5	3.03387498654549e-08\\
6	1.01541744079293e-09\\
7	1.03229084612143e-10\\
8	5.13401924086131e-12\\
9	1.98640885425879e-13\\
10	9.15776351016758e-15\\
11	5.0295312479009e-16\\
12	2.4444222514717e-17\\
13	2.05649506143923e-17\\
14	0\\
};
\addplot [color=black, mark=+, mark options={solid, black}, forget plot]
  table[row sep=crcr]{%
1	0.065486528713183\\
2	0.000630713254078722\\
3	4.17873200595129e-05\\
4	3.81987207003632e-06\\
5	4.13829549195372e-07\\
6	3.6393218863526e-08\\
7	3.32264620644011e-09\\
8	3.41547227337853e-10\\
9	1.32674967355427e-11\\
10	6.69092416222559e-13\\
11	1.41149264556106e-13\\
12	1.31521029471748e-14\\
13	1.55400425281195e-15\\
14	1.75443171003239e-16\\
15	5.44146947959405e-17\\
16	3.58050969348619e-17\\
17	2.36817762390759e-17\\
18	1.66459590719424e-17\\
19	5.24332610402245e-18\\
};
\addplot [color=black, mark=+, mark options={solid, black}, forget plot]
  table[row sep=crcr]{%
1	0.0654820409556706\\
2	0.000629130080552123\\
3	4.74239153082931e-05\\
4	5.86719038411524e-06\\
5	8.28970163448883e-07\\
6	1.45288562408937e-07\\
7	2.03459835469647e-08\\
8	3.14177847739439e-09\\
9	4.27897389174787e-10\\
10	8.1561603431343e-11\\
11	1.21801683581693e-11\\
12	1.33018442968197e-12\\
13	1.44064025028268e-13\\
14	1.50481415491338e-14\\
15	1.45549152433834e-15\\
16	1.37777418359526e-16\\
17	5.52770338767917e-17\\
18	3.11648277976232e-17\\
19	0\\
};
\addplot [color=black, mark=+, mark options={solid, black}, forget plot]
  table[row sep=crcr]{%
1	0.0654816570641727\\
2	0.000628587774841944\\
3	4.74256588265342e-05\\
4	6.4432787790063e-06\\
5	1.14287566200219e-06\\
6	2.19553852800154e-07\\
7	4.75194067477135e-08\\
8	1.08724134043148e-08\\
9	2.08629042074346e-09\\
10	4.43509795615433e-10\\
11	8.65736762427213e-11\\
12	1.79750853601984e-11\\
13	4.27241065523942e-12\\
14	9.69470606340366e-13\\
15	1.72790806158448e-13\\
16	2.89093817827406e-14\\
17	4.53307405681391e-15\\
18	8.54501936623667e-16\\
19	1.32711035526243e-16\\
20	4.71560831142207e-17\\
21	1.95072260121149e-17\\
22	0\\
23	0\\
};
\addplot [color=black, mark=+, mark options={solid, black}, forget plot]
  table[row sep=crcr]{%
1	0.0654816336464815\\
2	0.000628539904326765\\
3	4.73745605955742e-05\\
4	6.4401145905293e-06\\
5	1.19843075838849e-06\\
6	2.68517223410753e-07\\
7	6.51052697004099e-08\\
8	1.64325116020091e-08\\
9	4.58527454630105e-09\\
10	1.28226291083363e-09\\
11	3.18599914688575e-10\\
12	8.33631297252864e-11\\
13	2.22520844936899e-11\\
14	5.27897913166358e-12\\
15	1.45353677021576e-12\\
16	4.11331699973545e-13\\
17	1.19908203088738e-13\\
18	3.19592786159679e-14\\
19	7.07595795158176e-15\\
20	1.70179996260006e-15\\
21	3.60587147333691e-16\\
22	9.52443848996126e-17\\
23	3.72277546438329e-17\\
24	3.06768628555211e-17\\
25	1.22883811916784e-17\\
};
\addplot [color=black, mark=+, mark options={solid, black}, forget plot]
  table[row sep=crcr]{%
1	0.0654816395551749\\
2	0.000628536625488465\\
3	4.73695474043616e-05\\
4	6.43482325477773e-06\\
5	1.19659080134508e-06\\
6	2.72689227175477e-07\\
7	7.15914924828911e-08\\
8	2.04805147823209e-08\\
9	6.06326630015468e-09\\
10	1.85347148950474e-09\\
11	6.09230240596622e-10\\
12	2.02582056191196e-10\\
13	6.19118391535934e-11\\
14	1.87823431047457e-11\\
15	6.07798945049801e-12\\
16	1.85626153056814e-12\\
17	5.28013941393038e-13\\
18	1.60287858382205e-13\\
19	4.85717702309535e-14\\
20	1.31992040803635e-14\\
21	3.67684668040423e-15\\
22	1.04265616192016e-15\\
23	2.58547194036586e-16\\
24	6.11257224401442e-17\\
25	1.47326974291696e-17\\
};
\addplot [color=black, mark=+, mark options={solid, black}]
  table[row sep=crcr]{%
1	0.0654816392477379\\
2	0.000628536399189709\\
3	4.73691939849141e-05\\
4	6.43432179676452e-06\\
5	1.19602052066092e-06\\
6	2.72302683834923e-07\\
7	7.17100974744284e-08\\
8	2.10960306685043e-08\\
9	6.7403914507323e-09\\
10	2.26369075843828e-09\\
11	7.74881661166761e-10\\
12	2.68389069763179e-10\\
13	9.50856573522614e-11\\
14	3.38371876819738e-11\\
15	1.14514234464002e-11\\
16	3.61004694590896e-12\\
17	1.07774746754184e-12\\
18	3.10638005944296e-13\\
19	8.74176245464464e-14\\
20	2.40692348189261e-14\\
21	6.46557471604317e-15\\
22	1.67418272094102e-15\\
23	4.15320333906591e-16\\
24	1.01564872766246e-16\\
25	2.10947401059876e-17\\
26	0\\
27	0\\
28	0\\
};
\addlegendentry{$\sigma_k$}

\addplot [color=black, line width=2.0pt]
  table[row sep=crcr]{%
1	0.000174146451546374\\
2	4.62725979588192e-06\\
3	1.2295130350608e-07\\
4	3.26694927466529e-09\\
5	8.68063799153478e-11\\
6	2.3065395145383e-12\\
7	6.12872525880549e-14\\
8	1.62846866750683e-15\\
9	4.32701759185779e-17\\
10	1.1497354302132e-18\\
};
\addlegendentry{$C\mathrm{e}^{-\gamma_1 k}$}

\addplot [color=black, dashed, line width=2.0pt]
  table[row sep=crcr]{%
1	0.654816392477379\\
2	0.0221530888200105\\
3	0.00164792636656506\\
4	0.000184319488433547\\
5	2.675472573751e-05\\
6	4.67343514668881e-06\\
7	9.39290749796162e-07\\
8	2.10960306685044e-07\\
9	5.18827479071979e-08\\
10	1.376728495922e-08\\
11	3.89790601235019e-09\\
12	1.16736797663134e-09\\
13	3.67268707349632e-10\\
14	1.20707041648479e-10\\
15	4.12528013896877e-11\\
16	1.4604096144572e-11\\
17	5.33804481237004e-12\\
18	2.00894111689058e-12\\
19	7.76580809767195e-13\\
20	3.07704025160056e-13\\
};
\addlegendentry{$C\mathrm{e}^{-\gamma_2 \sqrt{k}}$}

\end{axis}
\end{tikzpicture}%
  \caption{The singular values of the solutions computed in \nameref{example:DRE_semibdd}, at the final time $T = 0.1$. They increase monotonically, and thus the lower-most line corresponds to $N = 20$ while the top-most corresponds to $N = 65792$.}
  \label{fig:ex02_1}
\end{figure}

We have additionally plotted the largest singular value of the finest discretized problem as a function of time in \cref{fig:ex02_2}. We note that it grows roughly as $t^{1}$, corresponding well to the factor $t^{1-2\alpha}$ predicted by \cref{thm:DRE_decay}.

\begin{figure}[htbp]
  \centering
%
%
\begin{tikzpicture}

\begin{axis}[%
width=0.761\columnwidth,
height=0.494\columnwidth,
at={(0\columnwidth,0\columnwidth)},
scale only axis,
xmin=0,
xmax=0.1,
xlabel style={font=\color{white!15!black}},
xlabel={$t$},
ymode=log,
ymin=0.0001,
ymax=0.1,
yminorticks=true,
axis background/.style={fill=white},
title style={font=\bfseries},
title={Largest singular value over time},
legend style={legend cell align=left, align=left, draw=white!15!black},
xtick distance= 0.02,
legend pos = south east
]
\addplot [color=black, mark=asterisk, mark options={solid, black}]
  table[row sep=crcr]{%
0.1	0.0654816392477379\\
0.05	0.0377701174889353\\
0.025	0.0206696208825295\\
0.0125	0.0109671344690677\\
0.00625	0.00570755144622061\\
0.003125	0.00293308657093698\\
0.0015625	0.00149461587867307\\
0.00078125	0.000757241284046826\\
0.000390625	0.000382134692591297\\
};
\addlegendentry{$\sigma_1$}

\addplot [color=black, line width=2.0pt]
  table[row sep=crcr]{%
0.1	0.0978264813033721\\
0.05	0.0489132406516861\\
0.025	0.024456620325843\\
0.0125	0.0122283101629215\\
0.00625	0.00611415508146076\\
0.003125	0.00305707754073038\\
0.0015625	0.00152853877036519\\
0.00078125	0.000764269385182595\\
0.000390625	0.000382134692591297\\
};
\addlegendentry{$C t^{1-0}$}

\addplot [color=black, dashed, line width=2.0pt]
  table[row sep=crcr]{%
0.1	0.00611415508146076\\
0.05	0.00432336051932709\\
0.025	0.00305707754073038\\
0.0125	0.00216168025966354\\
0.00625	0.00152853877036519\\
0.003125	0.00108084012983177\\
0.0015625	0.000764269385182595\\
0.00078125	0.000540420064915886\\
0.000390625	0.000382134692591297\\
};
\addlegendentry{$Ct^{1-1/2}$}

\end{axis}
\end{tikzpicture}%
  \caption{The largest singular value of the solution with $N = 65792$ computed in \nameref{example:DRE_semibdd}, plotted over time.}
  \label{fig:ex02_2}
\end{figure}

\subsection{Example 3}\label{example:DRE_unbdd}
Now consider the same setting as in the previous example, but with an unbounded output as well. More precisely, we take $Y = \R$ and define $C$ as the integral of the boundary trace over $\Gamma_T \cap \Gamma_B$:
\begin{equation*}
  C x = \int_{\Gamma_T \cap \Gamma_B}{ x_{\rvert_{\Gamma}}(s) \dif{s}}.
\end{equation*}
By~\cite[Theorem 8.3]{LioM72}, the map $x \mapsto x_{\rvert_{\Gamma}}$ belongs to $\cL(H^{1/2}(\Omega), L^2(\Gamma))$ and hence the map $CA^{-\alpha}$ is bounded for $\alpha = 1/4 + \epsilon$, $\epsilon > 0$.

With the same discretizations as in \nameref{example:DRE_semibdd}, the behaviour of the singular values is similar to when $C$ was bounded. The decay is, however, noticeably slower, as shown in \cref{fig:ex03_1}. The effect of a larger $\alpha$ can also clearly be seen when plotting the singular values for a specific discretization over time. \cref{fig:ex03_2} again shows the largest singular value for the finest discretization. We note that in comparison to \cref{fig:ex02_2}, the increase is now close to $t^{1/2}$ rather than $t^1$. Since $\alpha = 1/4$, this is in good agreement with the factor $t^{1-2\alpha}$ predicted by \cref{thm:DRE_decay}.

\begin{figure}[htbp]
  \centering
%
%
\begin{tikzpicture}

\begin{axis}[%
width=0.761\columnwidth,
height=0.494\columnwidth,
at={(0\columnwidth,0\columnwidth)},
scale only axis,
xmin=0,
xmax=35,
xlabel style={font=\color{white!15!black}},
xlabel={$k$},
ymode=log,
ymin=1e-18,
ymax=100,
yminorticks=true,
ylabel style={font=\color{white!15!black}},
ylabel={$\sigma_k$},
axis background/.style={fill=white},
title style={font=\bfseries},
title={Singular values for different discretizations},
legend style={legend cell align=left, align=left, draw=white!15!black},
xtick distance= 5,
legend pos = north east
]
\addplot [color=black, mark=+, mark options={solid, black}, forget plot]
  table[row sep=crcr]{%
1	0.324626005896867\\
2	0.0662463784040784\\
3	0.00961845146910709\\
4	0.000240794310585473\\
5	3.68417402205364e-05\\
6	3.18469221146993e-06\\
7	1.63520140633699e-07\\
8	1.44232472072261e-08\\
9	5.05762405641594e-10\\
10	5.11742364820082e-12\\
11	2.25325755789147e-13\\
12	5.47205141037533e-15\\
13	3.83624483747043e-16\\
14	2.8568706408669e-16\\
15	1.62831249128187e-16\\
16	9.41371760485492e-17\\
17	0\\
};
\addplot [color=black, mark=+, mark options={solid, black}, forget plot]
  table[row sep=crcr]{%
1	0.32118569093864\\
2	0.0667881090861144\\
3	0.0168144712539209\\
4	0.00232207386874533\\
5	0.000196803483836662\\
6	3.34084881319168e-05\\
7	5.55310002214867e-06\\
8	8.45499257506543e-07\\
9	9.46298820622626e-08\\
10	1.38207481216139e-08\\
11	1.66255461266688e-09\\
12	2.10539237430392e-10\\
13	1.53265266854034e-11\\
14	1.69342901047427e-12\\
15	1.06715372091342e-13\\
16	1.13283089599284e-14\\
17	1.63075702968878e-15\\
18	8.76639252767054e-16\\
19	4.3888564270035e-16\\
20	1.03271489841085e-16\\
21	0\\
22	0\\
23	0\\
};
\addplot [color=black, mark=+, mark options={solid, black}, forget plot]
  table[row sep=crcr]{%
1	0.320311579434911\\
2	0.0626386790890421\\
3	0.0200858842854625\\
4	0.00479033356675081\\
5	0.000894560092066162\\
6	0.000133105582378534\\
7	2.76016674202601e-05\\
8	9.19482499058932e-06\\
9	1.67084414837025e-06\\
10	3.42883625465069e-07\\
11	1.03274895955836e-07\\
12	1.08251978686431e-08\\
13	3.33174475418396e-09\\
14	3.62033969019132e-10\\
15	1.12595336087794e-10\\
16	1.32262109433307e-11\\
17	2.25317727932761e-12\\
18	3.42084089336299e-13\\
19	5.42955166403647e-14\\
20	8.50397840649146e-15\\
21	1.29480427811079e-15\\
22	1.6388765416848e-16\\
};
\addplot [color=black, mark=+, mark options={solid, black}, forget plot]
  table[row sep=crcr]{%
1	0.32019071594587\\
2	0.0610700453642103\\
3	0.0190589528543378\\
4	0.00667006113702053\\
5	0.00172635716072347\\
6	0.000423646047701593\\
7	9.4058008339867e-05\\
8	2.55425497769672e-05\\
9	9.49310260164132e-06\\
10	2.29251171459633e-06\\
11	5.59762408797652e-07\\
12	2.11997205697606e-07\\
13	6.51194582384895e-08\\
14	1.34037392244842e-08\\
15	3.05615723958962e-09\\
16	1.0036264145044e-09\\
17	2.08140659211243e-10\\
18	5.94557553477103e-11\\
19	1.83004057629913e-11\\
20	2.71296685371763e-12\\
21	7.67433189420269e-13\\
22	1.57538026088597e-13\\
23	3.03291059796749e-14\\
24	9.4129891878204e-15\\
25	1.63662795232345e-15\\
26	4.8804954262025e-16\\
27	7.2883396757248e-17\\
28	0\\
};
\addplot [color=black, mark=+, mark options={solid, black}, forget plot]
  table[row sep=crcr]{%
1	0.320177795033949\\
2	0.0607831625511076\\
3	0.0179765320247669\\
4	0.00685001021994036\\
5	0.00249697081901145\\
6	0.000750097837537161\\
7	0.000227256825809431\\
8	6.74559205754175e-05\\
9	2.34300924887438e-05\\
10	9.29772844352821e-06\\
11	2.88203569227046e-06\\
12	7.74332205534703e-07\\
13	2.72103366570637e-07\\
14	1.26533760805635e-07\\
15	4.04807328948159e-08\\
16	1.11355989187578e-08\\
17	3.44376245218127e-09\\
18	1.27116432912355e-09\\
19	3.5238064441467e-10\\
20	9.88403317262992e-11\\
21	3.93836277800925e-11\\
22	1.36472822259021e-11\\
23	3.024034124846e-12\\
24	8.95219682099141e-13\\
25	3.03161287207082e-13\\
26	6.50458574494028e-14\\
27	2.02337053760163e-14\\
28	7.4301783949687e-15\\
29	1.73154291157954e-15\\
30	5.62964739125733e-16\\
31	2.22092941905503e-16\\
32	1.79729515365416e-16\\
33	1.11472125566329e-16\\
34	0\\
35	0\\
36	0\\
};
\addplot [color=black, mark=+, mark options={solid, black}, forget plot]
  table[row sep=crcr]{%
1	0.32017663984044\\
2	0.0607463963159079\\
3	0.0176791869099058\\
4	0.0063966698540766\\
5	0.00274088761461046\\
6	0.00106470651442352\\
7	0.000369691727001161\\
8	0.000131738931388654\\
9	4.85941203064532e-05\\
10	2.06467624249438e-05\\
11	8.7070586255482e-06\\
12	2.99130634838647e-06\\
13	9.95269182676322e-07\\
14	3.43396428340727e-07\\
15	1.51566079875742e-07\\
16	5.34979088756848e-08\\
17	1.64855571041257e-08\\
18	4.9588321026983e-09\\
19	1.95742524960619e-09\\
20	6.66046607303135e-10\\
21	1.89196097953216e-10\\
22	6.19947323680425e-11\\
23	2.34854478668796e-11\\
24	5.74683403868671e-12\\
25	1.49949590361525e-12\\
26	5.38277037033498e-13\\
27	1.42912813636639e-13\\
28	3.12641053610553e-14\\
29	1.15168681052052e-14\\
30	3.15075367318687e-15\\
31	7.99329326126042e-16\\
32	2.806903501365e-16\\
33	1.94138971438579e-16\\
34	1.51678461448806e-16\\
35	9.60131265673129e-17\\
36	0\\
37	0\\
38	0\\
39	0\\
40	0\\
};
\addplot [color=black, mark=+, mark options={solid, black}]
  table[row sep=crcr]{%
1	0.320176547367678\\
2	0.0607424067601834\\
3	0.017630908612853\\
4	0.00618398297616692\\
5	0.00254491496232005\\
6	0.00112739662585988\\
7	0.000454728902940437\\
8	0.000167328441256392\\
9	6.12885364352261e-05\\
10	2.57473783061587e-05\\
11	1.21012153045692e-05\\
12	4.58180394772946e-06\\
13	1.4931028327517e-06\\
14	4.84294058758807e-07\\
15	2.02360518993009e-07\\
16	7.86643623442207e-08\\
17	2.24210387659877e-08\\
18	6.26594536077476e-09\\
19	2.34271487052187e-09\\
20	8.40615832275164e-10\\
21	2.22440084550712e-10\\
22	6.83061794890104e-11\\
23	2.7141798458707e-11\\
24	7.05846094789269e-12\\
25	1.70869760755136e-12\\
26	5.84244541748562e-13\\
27	1.63047340007611e-13\\
28	3.63644848547604e-14\\
29	1.28813417759244e-14\\
30	3.56316246768755e-15\\
31	8.44027265332905e-16\\
32	2.89517656322009e-16\\
33	9.37863714078927e-17\\
};
\addlegendentry{$\sigma_k$}

\addplot [color=black, line width=2.0pt]
  table[row sep=crcr]{%
1	0.00293858063754699\\
2	0.000266006296676968\\
3	2.40794310585473e-05\\
4	2.179719079385e-06\\
5	1.97312604832019e-07\\
6	1.78611383429195e-08\\
7	1.61682657413856e-09\\
8	1.46358430277593e-10\\
9	1.32486628163778e-11\\
10	1.19929590724057e-12\\
};
\addlegendentry{$C\mathrm{e}^{-\gamma_1 k}$}

\addplot [color=black, dashed, line width=2.0pt]
  table[row sep=crcr]{%
1	3.20176547367678\\
2	0.577993630320796\\
3	0.155396477245436\\
4	0.051345536351569\\
5	0.0193547422822505\\
6	0.00801160407470614\\
7	0.00356001785490311\\
8	0.00167328441256392\\
9	0.000823409498573545\\
10	0.000421061796202141\\
11	0.000222489852830906\\
12	0.000120950035276341\\
13	6.74093078731731e-05\\
14	3.84082749936782e-05\\
15	2.23207903534548e-05\\
16	1.32047155511001e-05\\
17	7.93904722149103e-06\\
18	4.84413768327417e-06\\
19	2.99602936720412e-06\\
20	1.87628344363643e-06\\
};
\addlegendentry{$C\mathrm{e}^{-\gamma_2 \sqrt{k}}$}

\end{axis}
\end{tikzpicture}%
  \caption{The singular values of the solutions computed in \nameref{example:DRE_unbdd}, at the final time $T = 0.1$. They increase monotonically, and thus the lower-most line corresponds to $N = 20$ while the top-most corresponds to $N = 65792$.}
  \label{fig:ex03_1}
\end{figure}

\begin{figure}[htbp]
  \centering
%
%
\begin{tikzpicture}

\begin{axis}[%
width=0.761\columnwidth,
height=0.494\columnwidth,
at={(0\columnwidth,0\columnwidth)},
scale only axis,
xmin=0,
xmax=0.1,
xlabel style={font=\color{white!15!black}},
xlabel={$t$},
ymode=log,
ymin=0.01,
ymax=10,
yminorticks=true,
axis background/.style={fill=white},
title style={font=\bfseries},
title={Largest singular value over time},
legend style={legend cell align=left, align=left, draw=white!15!black},
xtick distance= 0.02,
legend pos = south east
]
\addplot [color=black, mark=asterisk, mark options={solid, black}]
  table[row sep=crcr]{%
0.1	0.320176547367678\\
0.05	0.236734975687342\\
0.025	0.178009739838676\\
0.0125	0.131960995042279\\
0.00625	0.0963809825574944\\
0.003125	0.0696910633561737\\
0.0015625	0.0500509541422947\\
0.00078125	0.0357791381757888\\
0.000390625	0.0254963718109634\\
};
\addlegendentry{$\sigma_1$}

\addplot [color=black, line width=2.0pt]
  table[row sep=crcr]{%
0.1	6.52707118360663\\
0.05	3.26353559180332\\
0.025	1.63176779590166\\
0.0125	0.815883897950829\\
0.00625	0.407941948975415\\
0.003125	0.203970974487707\\
0.0015625	0.101985487243854\\
0.00078125	0.0509927436219268\\
0.000390625	0.0254963718109634\\
};
\addlegendentry{$C t^{1-0}$}

\addplot [color=black, dashed, line width=2.0pt]
  table[row sep=crcr]{%
0.1	0.407941948975415\\
0.05	0.288458518450972\\
0.025	0.203970974487707\\
0.0125	0.144229259225486\\
0.00625	0.101985487243854\\
0.003125	0.0721146296127431\\
0.0015625	0.0509927436219268\\
0.00078125	0.0360573148063715\\
0.000390625	0.0254963718109634\\
};
\addlegendentry{$Ct^{1-1/2}$}

\end{axis}
\end{tikzpicture}%
  \caption{The largest singular value of the solution with $N = 65792$ computed in \nameref{example:DRE_unbdd}, plotted over time.}
  \label{fig:ex03_2}
\end{figure}

\subsection{Example 4}\label{example:DRE_too_unbdd}
Let us now consider a situation when the main assumptions are not satisfied. In particular, let us take the same set-up as in \nameref{example:DRE_unbdd} except for the output operator. We now instead take the trace of the normal derivative:
\begin{equation*}
  C x = \int_{\Gamma_T \cap \Gamma_B}{ \bigg( \frac{\partial x}{\partial \nu}\bigg)_{\rvert_{\Gamma}}(s) \dif{s}}.
\end{equation*}
Again by ~\cite[Theorem 8.3]{LioM72}, the map $x \mapsto \big(\frac{\partial x}{\partial \nu}\big)_{\rvert_{\Gamma}}$ belongs to $\cL(H^{3/2}(\Omega), L^2(\Gamma))$ and hence the map $CA^{-\alpha}$ is bounded for $\alpha = 3/4 + \epsilon$, $\epsilon > 0$. Since $\alpha  >  1/2$, \cref{assumption:C} is not satisfied, and we can in fact not show the existence of a solution $P \in \LHH$.

This is reflected in the results shown in \cref{fig:ex04}. We have discretized the problem in the same way as previously, and we plot the singular values for the different discretizations like in \cref{fig:ex01,fig:ex02_1}. In contrast to the previous results, we now see that the singular values keep increasing as we refine the discretization, demonstrating that the singular values of the exact solution are infinite. Thus, while the singular values of a single discretized matrix-valued equation seem to decay exponentially, since the underlying problem is not well posed these ``approximations'' are nevertheless worthless.

\begin{figure}[htbp]
  \centering
%
%
\begin{tikzpicture}

\begin{axis}[%
width=0.761\columnwidth,
height=0.494\columnwidth,
at={(0\columnwidth,0\columnwidth)},
scale only axis,
xmin=0,
xmax=35,
xlabel style={font=\color{white!15!black}},
xlabel={$k$},
ymode=log,
ymin=1e-16,
ymax=10000,
yminorticks=true,
ylabel style={font=\color{white!15!black}},
ylabel={$\sigma_k$},
axis background/.style={fill=white},
title style={font=\bfseries},
title={Singular values for different discretizations},
legend style={legend cell align=left, align=left, draw=white!15!black},
xtick distance= 5,
legend pos = north east
]
\addplot [color=black, mark=+, mark options={solid, black}, forget plot]
  table[row sep=crcr]{%
1	11.4717057472521\\
2	0.799754750153594\\
3	0.0909037893123634\\
4	0.013414835275718\\
5	0.00179040074738702\\
6	0.000175484054225928\\
7	1.79494822907199e-05\\
8	1.23989880243374e-06\\
9	9.47697593147386e-08\\
10	5.01885581526649e-09\\
11	2.31650450192585e-10\\
12	1.19477028881194e-11\\
13	5.8570884047532e-13\\
14	1.92107562298344e-14\\
15	8.60484603929157e-15\\
16	0\\
17	0\\
};
\addplot [color=black, mark=+, mark options={solid, black}, forget plot]
  table[row sep=crcr]{%
1	23.0721993861899\\
2	1.35830485078011\\
3	0.16392549404135\\
4	0.0293779387640003\\
5	0.0085448144134859\\
6	0.00106969602345861\\
7	0.000264674609892087\\
8	5.33061100812936e-05\\
9	9.34111396534168e-06\\
10	1.52304200092177e-06\\
11	2.31526160167999e-07\\
12	2.79726084740141e-08\\
13	4.10321889674002e-09\\
14	6.19407475727144e-10\\
15	5.76014703612864e-11\\
16	4.77760871293549e-12\\
17	7.26963753997415e-13\\
18	3.14332457382268e-13\\
19	7.98601963166361e-14\\
20	1.88186497453238e-14\\
21	1.37971322392975e-14\\
22	0\\
};
\addplot [color=black, mark=+, mark options={solid, black}, forget plot]
  table[row sep=crcr]{%
1	46.2639668131427\\
2	2.60268975186027\\
3	0.261047045223809\\
4	0.054307508942811\\
5	0.011286843447854\\
6	0.00243831780709487\\
7	0.000592055894368569\\
8	0.000199382001856706\\
9	4.69807716219495e-05\\
10	1.88652439601871e-05\\
11	3.37603732279068e-06\\
12	1.16383119084143e-06\\
13	2.09867007629628e-07\\
14	4.31660910153024e-08\\
15	1.09565205020286e-08\\
16	2.22110097664254e-09\\
17	4.2076169568693e-10\\
18	8.02097150858657e-11\\
19	1.76841112410987e-11\\
20	3.48302198659523e-12\\
21	7.07969958324519e-13\\
22	3.55196684151481e-13\\
23	1.02146618489726e-13\\
24	1.99019671907811e-14\\
25	0\\
26	0\\
};
\addplot [color=black, mark=+, mark options={solid, black}, forget plot]
  table[row sep=crcr]{%
1	92.6695400374594\\
2	5.08169170718399\\
3	0.457922287287119\\
4	0.0876889643115113\\
5	0.019103759211827\\
6	0.00431151314277681\\
7	0.000962249963957456\\
8	0.000275732677663474\\
9	8.10034927104983e-05\\
10	2.94694103467946e-05\\
11	1.21971985448925e-05\\
12	4.04280499775716e-06\\
13	1.34677948184825e-06\\
14	4.7841071114727e-07\\
15	1.25176074977589e-07\\
16	4.61762234115846e-08\\
17	1.33662185341555e-08\\
18	3.48383284448941e-09\\
19	1.19743232226415e-09\\
20	2.33202359957786e-10\\
21	7.85410066052674e-11\\
22	1.63248159462824e-11\\
23	5.38421856690597e-12\\
24	1.19743569252899e-12\\
25	3.71503290545389e-13\\
26	6.12468626831138e-14\\
27	0\\
28	0\\
};
\addplot [color=black, mark=+, mark options={solid, black}, forget plot]
  table[row sep=crcr]{%
1	185.497527647469\\
2	10.0348342111497\\
3	0.850536884154935\\
4	0.147381864673296\\
5	0.0313898949539237\\
6	0.00762963466995392\\
7	0.00167306474685075\\
8	0.000455443960352732\\
9	0.000125998602041506\\
10	4.31200500563698e-05\\
11	1.62914368926325e-05\\
12	5.93151269839907e-06\\
13	2.36207473271038e-06\\
14	9.95589495923865e-07\\
15	3.98839152348003e-07\\
16	1.42458817695091e-07\\
17	5.46641901459763e-08\\
18	2.15579696992407e-08\\
19	7.79947603742488e-09\\
20	3.08038305411884e-09\\
21	1.1551507525268e-09\\
22	3.4802726439259e-10\\
23	1.19208737169967e-10\\
24	3.95830301324255e-11\\
25	1.0244041903588e-11\\
26	3.42647910975405e-12\\
27	1.18075711003118e-12\\
28	9.23904331649873e-13\\
29	4.9109874636866e-13\\
30	3.15939466290242e-13\\
31	1.77639829396138e-13\\
32	1.35669178530493e-13\\
33	0\\
};
\addplot [color=black, mark=+, mark options={solid, black}, forget plot]
  table[row sep=crcr]{%
1	370.980085836899\\
2	19.8239438354706\\
3	1.57896332249553\\
4	0.22904797158929\\
5	0.033841264866239\\
6	0.00695866454751729\\
7	0.00139933682625873\\
8	0.000313769553864065\\
9	8.48934830306415e-05\\
10	2.39643535255505e-05\\
11	7.09111830437647e-06\\
12	2.30155662088207e-06\\
13	8.19898293257311e-07\\
14	3.07842961319744e-07\\
15	1.19448855044169e-07\\
16	4.96775335642064e-08\\
17	2.00065098044534e-08\\
18	7.17730499519336e-09\\
19	2.64430078376057e-09\\
20	1.06326634298727e-09\\
21	3.97431132955898e-10\\
22	1.38372068694536e-10\\
23	4.7733057136067e-11\\
24	1.52959743879981e-11\\
25	5.11262573151448e-12\\
26	1.58249814559657e-12\\
27	7.46180505646128e-13\\
28	3.7138939538205e-13\\
29	3.36240405221214e-13\\
30	2.84823094183561e-13\\
31	1.19669511355762e-13\\
32	0\\
33	0\\
34	0\\
35	0\\
};
\addplot [color=black, mark=+, mark options={solid, black}]
  table[row sep=crcr]{%
1	557.582750518393\\
2	13.6731332864346\\
3	0.658783988393988\\
4	0.0435190762198005\\
5	0.00441940070335371\\
6	0.000613264422774149\\
7	0.000106711343871976\\
8	2.23539089060813e-05\\
9	5.45345997145039e-06\\
10	1.49468703969408e-06\\
11	4.48721766805667e-07\\
12	1.45205942709836e-07\\
13	4.99666137319434e-08\\
14	1.8083078322106e-08\\
15	6.9597347289328e-09\\
16	2.92705948212967e-09\\
17	1.19205931443441e-09\\
18	4.30061880977175e-10\\
19	1.55162778927193e-10\\
20	6.1021518461204e-11\\
21	2.23232313884682e-11\\
22	7.68412498484244e-12\\
23	2.72792951449892e-12\\
24	6.46468044239906e-13\\
25	6.12212804610183e-13\\
26	0\\
};
\addlegendentry{$\sigma_k$}

\addplot [color=black, line width=2.0pt]
  table[row sep=crcr]{%
1	0.120859403703835\\
2	0.0127330632300655\\
3	0.0013414835275718\\
4	0.000141331117440562\\
5	1.48898472075563e-05\\
6	1.5687100893235e-06\\
7	1.65270422862131e-07\\
8	1.74119570333143e-08\\
9	1.8344253162763e-09\\
10	1.93264676369057e-10\\
};
\addlegendentry{$C\mathrm{e}^{-\gamma_1 k}$}

\addplot [color=black, dashed, line width=2.0pt]
  table[row sep=crcr]{%
1	5575.82750518393\\
2	117.651142313081\\
3	6.09222872280815\\
4	0.502087998070038\\
5	0.0556877762181334\\
6	0.00762703659780119\\
7	0.0012256708801754\\
8	0.000223539089060813\\
9	4.52116493151203e-05\\
10	9.97128869919425e-06\\
11	2.36774770183451e-06\\
12	5.99395640649863e-07\\
13	1.60499915416618e-07\\
14	4.51703567754339e-08\\
15	1.32913923801708e-08\\
16	4.07118521384828e-09\\
17	1.2932825618916e-09\\
18	4.24727914712156e-10\\
19	1.43808514645304e-10\\
20	5.00819155333455e-11\\
};
\addlegendentry{$C\mathrm{e}^{-\gamma_2 \sqrt{k}}$}

\end{axis}
\end{tikzpicture}%
  \caption{The singular values of the solutions computed in \nameref{example:DRE_too_unbdd}, at the final time $T = 0.1$. They increase (roughly) monotonically, and thus the lower-most line corresponds to $N = 20$ while the top-most corresponds to $N = 65792$. Because the underlying problem is not well-posed, the discretized solutions increase without bound.}
  \label{fig:ex04}
\end{figure}
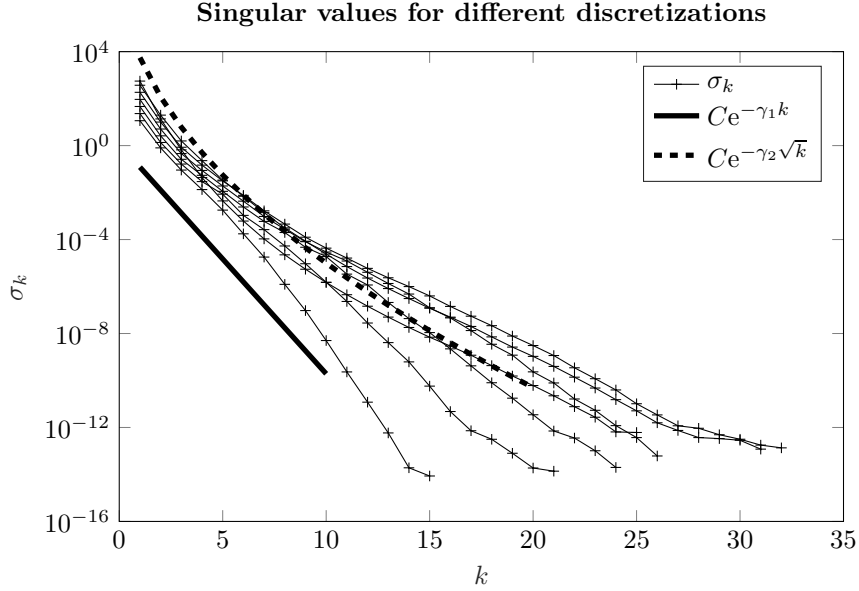

\subsection{Example 5}\label{example:DRE_too_unbdd_H1}
The situation in the previous Example holds when we use $H = L^2(\Omega)$. By instead selecting a smaller state space $H$, we decrease the value of $\alpha$. With $H = \{ x \in H^1(\Omega) \;;\; x_{\rvert_{\Gamma_L}} = 0\}$ and the same operator $C$ we again get $\alpha = 1/4 + \epsilon$. Since we simultaneously increase $\beta$ by $1/2$, we set $B = 0$ in this example to comply with \cref{assumption:B}. 

We note that we now consider the operator $A$ as restricted to $H$ instead of an operator on $L^2(\Omega)$. It still generates an analytic semigroup and \cref{assumption:A} is satisfied. Since the finite-element discretization of the problem is no longer based on $a(u,v)$ but on the corresponding inner product defined on $H^1$, the resulting problem is similar to a biharmonic equation. This imposes extra regularity requirements on the standard conforming finite element spaces, requiring a high number of nodes~\cite[p.~286]{BreS08}. In order to avoid this, in this example we employ instead the nonconforming Morley elements~\cite{Mor68,BreS08}.

The results are shown in \cref{fig:ex05}. We see that since $\alpha$ is now again less than $1/2$, the singular values behave much like in the previous examples.

\begin{figure}[htbp]
  \centering
%
%
\begin{tikzpicture}

\begin{axis}[%
width=0.761\columnwidth,
height=0.494\columnwidth,
at={(0\columnwidth,0\columnwidth)},
scale only axis,
xmin=0,
xmax=35,
xlabel style={font=\color{white!15!black}},
xlabel={$k$},
ymode=log,
ymin=1e-16,
ymax=100,
yminorticks=true,
ylabel style={font=\color{white!15!black}},
ylabel={$\sigma_k$},
axis background/.style={fill=white},
title style={font=\bfseries},
title={Singular values for different discretizations},
legend style={legend cell align=left, align=left, draw=white!15!black},
xtick distance= 5,
legend pos = north east
]
\addplot [color=black, mark=+, mark options={solid, black}, forget plot]
  table[row sep=crcr]{%
1	0.555294017375873\\
2	0.0729235524991152\\
3	0.00297943970692152\\
4	0.000238929356964017\\
5	6.0528474688259e-06\\
6	6.92448515278281e-08\\
7	1.49656351639972e-09\\
8	8.39715610392641e-12\\
9	1.19967844762473e-13\\
10	2.46780599046065e-15\\
11	1.56947850158348e-15\\
12	1.11464959840724e-15\\
13	9.86166997542847e-16\\
14	7.67041868016087e-16\\
15	2.32044848316472e-16\\
16	2.11368895034824e-16\\
17	0\\
18	0\\
19	0\\
};
\addplot [color=black, mark=+, mark options={solid, black}, forget plot]
  table[row sep=crcr]{%
1	0.455138637401064\\
2	0.104890284949332\\
3	0.0117133661994944\\
4	0.00129577419380907\\
5	0.000195456034242621\\
6	1.68316940902e-05\\
7	1.21185384334485e-06\\
8	6.60033699798702e-08\\
9	5.30765841674026e-09\\
10	3.48376172058214e-10\\
11	1.37962333837058e-11\\
12	4.67080712331264e-13\\
13	2.50852641529515e-14\\
14	9.47723261205858e-16\\
15	3.92713516819526e-16\\
16	2.95073716092033e-16\\
17	1.89007435793871e-16\\
18	0\\
19	0\\
};
\addplot [color=black, mark=+, mark options={solid, black}, forget plot]
  table[row sep=crcr]{%
1	0.427975324912588\\
2	0.0887988906990756\\
3	0.0244625232552311\\
4	0.00418751360900922\\
5	0.000696478586070297\\
6	0.000135177204621318\\
7	2.18132733036023e-05\\
8	3.82226185342837e-06\\
9	5.01938245251342e-07\\
10	5.84553022256673e-08\\
11	7.75368990712022e-09\\
12	1.54601204965203e-09\\
13	1.58601892654422e-10\\
14	1.93505767601567e-11\\
15	1.49007280435234e-12\\
16	1.56189229643339e-13\\
17	2.40615910529829e-14\\
18	2.20765682239026e-15\\
};
\addplot [color=black, mark=+, mark options={solid, black}, forget plot]
  table[row sep=crcr]{%
1	0.42034455941051\\
2	0.0762241000897388\\
3	0.0276301074769854\\
4	0.00797132928097739\\
5	0.00200015752734455\\
6	0.000402261292328604\\
7	9.46655504560369e-05\\
8	2.34271215548452e-05\\
9	6.55827778149134e-06\\
10	1.31777668407635e-06\\
11	2.46152973732126e-07\\
12	6.2585053167003e-08\\
13	8.98876442509245e-09\\
14	2.17787645518547e-09\\
15	6.09121905644656e-10\\
16	8.46407624433993e-11\\
17	2.00411428783177e-11\\
18	2.71353910091316e-12\\
19	4.37142855674937e-13\\
20	7.56670373880992e-14\\
21	1.87274211771866e-14\\
22	3.3494888201219e-15\\
23	3.95866431846824e-16\\
24	0\\
};
\addplot [color=black, mark=+, mark options={solid, black}, forget plot]
  table[row sep=crcr]{%
1	0.417812916071409\\
2	0.0734284710188245\\
3	0.0238873646067643\\
4	0.0101828111621939\\
5	0.00351997088132485\\
6	0.00102064362947924\\
7	0.000247026814361128\\
8	6.87116782498514e-05\\
9	2.19778678984011e-05\\
10	7.83931647267251e-06\\
11	2.44175311079155e-06\\
12	5.46844044281222e-07\\
13	1.56434432097441e-07\\
14	5.39169330287824e-08\\
15	1.13006254906876e-08\\
16	2.71130835457256e-09\\
17	8.29455835475732e-10\\
18	2.47128054593402e-10\\
19	6.28260182686681e-11\\
20	1.65473875107124e-11\\
21	3.6946884872647e-12\\
22	9.06605347569163e-13\\
23	1.88472150993802e-13\\
24	5.36830655070321e-14\\
25	1.33554701855771e-14\\
26	4.62897391089118e-15\\
27	3.55482160684098e-15\\
28	1.82810494671517e-15\\
29	1.67835318033266e-15\\
30	1.48437507789638e-15\\
31	9.92080792262864e-16\\
32	7.57659890540564e-16\\
33	6.31599594250169e-16\\
34	0\\
35	0\\
36	0\\
};
\addplot [color=black, mark=+, mark options={solid, black}, forget plot]
  table[row sep=crcr]{%
1	0.41694807241364\\
2	0.073175312804879\\
3	0.021904379894243\\
4	0.00954306920228542\\
5	0.00448028185253262\\
6	0.00173313880772561\\
7	0.000549006118841997\\
8	0.000158967930988088\\
9	5.10560243224564e-05\\
10	1.91991590290806e-05\\
11	7.88519685091742e-06\\
12	3.10236624845453e-06\\
13	9.91225621011566e-07\\
14	2.8741512848679e-07\\
15	1.07869737306704e-07\\
16	4.1442934287402e-08\\
17	1.20803540700239e-08\\
18	3.59438178159864e-09\\
19	1.04855564169818e-09\\
20	3.18778617442694e-10\\
21	1.29892182180376e-10\\
22	4.52199479105095e-11\\
23	1.23079651423864e-11\\
24	3.86542962515197e-12\\
25	1.29385286713891e-12\\
26	3.35571915504012e-13\\
27	1.05422606347886e-13\\
28	3.54802284495602e-14\\
29	8.75776375884729e-15\\
30	7.48043897279628e-15\\
31	5.09540177690301e-15\\
32	4.75134145369671e-15\\
33	1.37638611811234e-15\\
34	1.01235078649437e-15\\
35	7.27366576724722e-16\\
36	0\\
};
\addplot [color=black, mark=+, mark options={solid, black}]
  table[row sep=crcr]{%
1	0.416668176727274\\
2	0.0731743233262713\\
3	0.0216471623836172\\
4	0.00860365202946289\\
5	0.00436727162907277\\
6	0.00218071859281525\\
7	0.000885920348019039\\
8	0.00031092053638486\\
9	0.000104472539037054\\
10	3.71700791583258e-05\\
11	1.48795232051584e-05\\
12	6.31330244157523e-06\\
13	2.42111902600726e-06\\
14	8.47058597787407e-07\\
15	3.04345943258369e-07\\
16	1.22634632414343e-07\\
17	4.74153313110577e-08\\
18	1.46577196847312e-08\\
19	4.81193811133512e-09\\
20	1.74628853578247e-09\\
21	5.37795958888388e-10\\
22	1.73636376000195e-10\\
23	6.57453678400171e-11\\
24	2.07054144841669e-11\\
25	5.54293458236477e-12\\
26	1.76663400071693e-12\\
27	4.97258892161108e-13\\
28	1.38499559342657e-13\\
29	4.87874187202968e-14\\
30	1.26420345439224e-14\\
31	0\\
32	0\\
};
\addlegendentry{$\sigma_k$}

\addplot [color=black, line width=2.0pt]
  table[row sep=crcr]{%
1	0.00419216628691956\\
2	0.000316485638729457\\
3	2.38929356964017e-05\\
4	1.80378603744605e-06\\
5	1.36175985664889e-07\\
6	1.02805425293457e-08\\
7	7.76124763714019e-10\\
8	5.85931770751089e-11\\
9	4.42346457717216e-12\\
10	3.33947395281442e-13\\
};
\addlegendentry{$C\mathrm{e}^{-\gamma_1 k}$}

\addplot [color=black, dashed, line width=2.0pt]
  table[row sep=crcr]{%
1	4.16668176727274\\
2	0.815389193563967\\
3	0.233222817548471\\
4	0.0811895566577041\\
5	0.032044642465412\\
6	0.0138274559865894\\
7	0.00638378210926167\\
8	0.0031092053638486\\
9	0.0015820128530212\\
10	0.000834964712954767\\
11	0.000454663136396787\\
12	0.000254367476146267\\
13	0.000145727534854374\\
14	8.52630129256134e-05\\
15	5.08341046553554e-05\\
16	3.08261896006643e-05\\
17	1.89833784058568e-05\\
18	1.18558819161325e-05\\
19	7.50064330318332e-06\\
20	4.80209078373975e-06\\
};
\addlegendentry{$C\mathrm{e}^{-\gamma_2 \sqrt{k}}$}

\end{axis}
\end{tikzpicture}%
  \caption{The singular values of the solutions computed in \nameref{example:DRE_too_unbdd_H1}, at the final time $T = 0.1$. They increase monotonically, and thus the lower-most line corresponds to $N = 20$ while the top-most corresponds to $N = 65792$.}
  \label{fig:ex05}
\end{figure}

\section{Conclusions}
We have proved bounds for the singular values $\sigma_k$ of the solutions to DLEs and DREs of the form $\sigma_k \le M\exp{-\gamma \sqrt{k}}$, extending previous results on algebraic equations to the differential case. This is important, since utilizing the property of low numerical rank is a critical feature in numerical methods for these problems in the large-scale setting. If low numerical rank, i.e.\ a sufficiently rapid decay of the singular values, can not be guaranteed, these methods never finish, or fail outright. The current work is thus a step on the way to provide practical criteria for when this is to be expected. We say ``a step on the way'' because while we have given conditions for when exponential square-root decay is to be expected, we have not indicated how large the constant multiplier in the bound can be. A large value could mean that the numerical rank is too large to be useful in a practical application, even though the decay is $\mathcal{O}(\exp{-\gamma \sqrt{k}})$. However, the size of this constant depends strongly on the properties of the operators $A$ and $C$, and providing a generally meaningful bound is difficult with current techniques. We therefore leave this question open for future research, but note that the constants arising in our numerical experiments are all of moderate size.

A further interesting unexplored question is how the singular values of the solutions to the spatially discretized matrix-valued problems relate to those of the operator-valued solutions. As noted in the numerical experiments, one often observes exponential decay in the discretized case. When the discretization is refined, the decay rate deteriorates and eventually tends to the exponential square-root bound. The form of this decrease is, however, unclear. While it can be argued that the discretized equations are only steps on the way towards the non-discretized goal (and the author does argue thus), in practical computations we are of course always in the matrix-valued situation. Analysing also this case and providing a connection between the decay rate and the discretization level is therefore both highly interesting and important, but clearly requires a different approach.

\section{Acknowledgements}  
The author is grateful to Mark Opmeer for providing several helpful references. The careful reading and constructive comments from the anonymous referees also led to a greatly improved manuscript.

\bibliographystyle{siamplain}

\end{document}